\documentclass[final,leqno]{siamltex704}
\usepackage{amsmath}
\usepackage{graphicx}
\usepackage[notcite,notref]{showkeys}
\usepackage{mathrsfs}
\usepackage{appendix}
\usepackage{float}
\usepackage{amsfonts,amssymb}
\usepackage{dsfont}
\usepackage{pifont}
\usepackage{wrapfig} 
\usepackage{hyperref}
\usepackage{multirow}
\numberwithin{equation}{section}
\def\3bar{{|\hspace{-.02in}|\hspace{-.02in}|}}

\def\T{{\mathcal{T}}}

\def\pT{{\partial T}}

\def\W{{\mathcal{W}}}
\def\CL{{\mathcal{L}}}
\def\CQ{{\mathcal{Q}}}

\def\bv{{\mathbf{v}}}
\def\bn{{\mathbf{n}}}

\def\action#1#2{(\!(#1, #2)\!)_T}
\def\bmu{{\boldsymbol{\mu}}}
\def\brho{{\boldsymbol{\rho}}}
\def\bsigma{{\boldsymbol{\sigma}}}

\newtheorem{algorithm}{Primal-Dual Weak Galerkin Algorithm}[section]

\setlength{\parindent}{0.25in} \setlength{\parskip}{0.08in}

\title {A Primal-Dual Weak Galerkin Finite Element Method for
 Fokker-Planck Type Equations}

 \author{
Chunmei Wang\thanks{ Department of Mathematics, Texas State
University, San Marcos, TX 78666, USA.
The research of Chunmei Wang was partially supported by National
Science Foundation Award DMS-1522586 and DMS-1648171.
} \and Junping Wang\thanks{Division of Mathematical Sciences,
National Science Foundation, Arlington, VA 22230 (jwang@nsf.gov).
The research of Junping Wang was supported by the NSF IR/D
program, while working at National Science Foundation. However,
any opinion, finding, and conclusions or recommendations expressed
in this material are those of the author and do not necessarily
reflect the views of the National Science Foundation.}}

\begin{document}

\maketitle

\begin{abstract}
This paper presents a primal-dual weak Galerkin (PD-WG) finite
element method for a class of second order elliptic equations of
Fokker-Planck type. The method is based on a variational form
where all the derivatives are applied to the test functions so
that no regularity is necessary for the exact solution of the
model equation. The numerical scheme is designed by using locally
constructed weak second order partial derivatives and the weak
gradient commonly used in the weak Galerkin context. Optimal order
of convergence is derived for the resulting numerical solutions.
Numerical results are reported to demonstrate the performance of
the numerical scheme.
\end{abstract}

\begin{keywords} primal-dual,  weak Galerkin, finite element methods,
Fokker-Planck equation, weak Hessian, weak gradient, polytopal partitions.
\end{keywords}

\begin{AMS}
Primary, 65N30, 65N15, 65N12, 74N20; Secondary, 35B45, 35J50,
35J35
\end{AMS}

\pagestyle{myheadings}

\section{Introduction} The Fokker-Planck equation
plays a critical role in statistical physics and in the study of
fluctuations in physical and biological systems
\cite{fokker,planck,Risken,stratonovich,gardiner}. In statistical
physics, it is a second order partial differential equation that
describes the time evolution of the probability density function
of the velocity of a particle under the influence of drag forces
and random forces resulting from Gaussian white noise. The general
setting of the Fokker-Planck equation is as follows. Given an open
domain $\Omega\subset{\mathbb R}^d$ (the $d$-dimensional Euclidean
space) and a terminal time $T$, we seek for a time-dependent
density function $p=p(x,t): \ \Omega\times [0,T]\to {\mathbb R}$
satisfying
\begin{equation}\label{EQ:FPE}
\begin{split}
\partial_t p +\nabla\cdot (\bmu p)-\frac{1}{2}\sum_{i,j=1}^d
\partial_{ij}^2(a_{ij}p) & = \ 0,\qquad
t\in (0, T), \ x\in \Omega,\\
p(x, 0) & = \ p_0(x),\qquad x\in \Omega,
\end{split}
\end{equation}
where $\partial_{ij}^2=\frac{\partial}{\partial
x_j}\frac{\partial}{\partial x_i}$ is the second order partial
derivative in the directions $x_i$ and $x_j$,
$a(x)=\{a_{ij}(x)\}_{d\times d}$ is the diffusion tensor,
$\bmu=(\mu_1,\cdots, \mu_d)$ is the drift vector, and $p_0=p_0(x)$
is the initial profile of the density function. Two common
boundary conditions for (\ref{EQ:FPE}) can be imposed: Dirichlet
for the density function and Neumann condition for the flux. A
homogeneous Dirichlet boundary data corresponds to the case where
particles exit once they reach the boundary, and a prescribed flow
or Neumann boundary condition represents a known current of
particles crossing the boundary in the normal direction.

Numerical methods for the Fokker-Planck equation have several
challenges involving various difficulties of different nature.
Among them are the high dimensionality and lack of solution
regularity for the probability density function. For example, in
classical statistical mechanics, the Fokker-Planck equation
characterizes a joint probability density function in many phase
variables so that the dimension $d$ might be a big number to deal
with. It can also be seen that for non-smooth diffusion tensor
$a(x)$, the resulting probability density function $p=p(x)$
exhibits a shock-like discontinuity that needs to be resolved
numerically. In addition, some conservation properties such as
mass conservation and solution non-negativity property must be
retained by the numerical solutions.

Various finite element methods have been designed for the
Fokker-Planck equation for a numerical computation of the
probability density function, see
\cite{bhandari-sherrer,langley,langtangen,bergman,spencer-bergman,masud,kumar-narayana}
and the references cited therein. In
\cite{bhandari-sherrer,langley}, the stationary Fokker-Planck
equation was discretized by using Galerkin finite element methods
based on a weak form obtained from the usual integration by parts.
In \cite{langtangen}, another Galerkin finite element method was
used to solve the Fokker-Planck equation in combination with a
generalized Lagrange multiplier method to handle the associated
integral constraint. In \cite{bergman}, the authors applied the
usual $C^0$ finite element method to the Fokker-Planck system
subject to both additive and multiplicative white noise
excitations. In \cite{masud}, the authors developed a framework
for multi-scale finite element methods for the solution of the
multi-dimensional Fokker-Planck equation in stochastic structural
dynamics. All the aforementioned finite element methods assumed
smooth or constant diffusion tensor $a(x)$ for the Fokker-Planck
equation so that a regular weak form can be derived for the
system. In \cite{kumar-narayana}, a $C^0$ finite element scheme
was described and numerically tested for the transient
Fokker-Planck equation without any smoothness assumption on
$a(x)$; but no theory of convergence was developed for the
numerical method.

For a smooth diffusion tensor, the second order differential part
of the Fokker-Plank equation can be reformulated as
\begin{equation}\label{EQ:Reformulation}
\frac12 \partial_{ij}^2(a_{ij}p) = \frac12 \partial_j (a_{ij}(x)
\partial_i p) + \frac12
\partial_j ( (\partial_i a_{ij}) p).
\end{equation}
Therefore, the equation (\ref{EQ:FPE}) can be viewed as a
time-evolving convection-diffusion equation in divergence form.
Aside from the high dimensionality issue, the corresponding
Fokker-Planck equation is then considered as a relatively less
challenging problem to solve numerically. But for non-smooth
diffusion tensor, the formulation (\ref{EQ:Reformulation}) no
longer holds true, and the exact profile of the density function
$p=p(x)$ possesses discontinuities that are not known a priori so
that the existing finite element methods have difficulty to apply.
The goal of this paper is to develop a new finite element method
that addresses the numerical challenges arising from the
non-smoothness nature of the diffusion tensor $a(x)$ in the
Fokker-Planck equation.

For simplicity, we consider a Fokker-Planck type model equation
with homogeneous Dirichlet boundary condition. The model problem
seeks an unknown function $u=u(x)$ satisfying
\begin{equation}\label{1}
\begin{split}
\nabla \cdot (\bmu u)-\frac{1}{2}\sum_{i,j=1}^d
\partial^2_{ij}(a_{ij}u)=&f,\quad \text{in}\
\Omega,\\
u =& 0,\quad \text{on}\ \partial\Omega,
\end{split}
\end{equation}
where $\Omega$ is an open bounded domain in $\mathbb R^d$ with
Lipschitz continuous boundary $\partial\Omega$ and $f\in
L^2(\Omega)$ is a given function. Two of our main motivations for
this selection of the model problem are: (1) a complete
understanding of the Fokker-Planck equation strongly depends on
the numerical properties for (\ref{1}) as it offers a projection
operator that is extremely useful in the mathematical study of
(\ref{EQ:FPE}), and (2) the problem (\ref{1}) itself is a poorly
understood system from numerical aspects when the diffusion tensor
is discontinuous. Therefore, the model problem (\ref{1}) deserves
a study as a research topic in numerical partial differential
equations.

Throughout this paper, we assume that the diffusion tensor
$a(x)=\{a_{ij}(x)\}_{d\times d}\in (L^\infty (\Omega))^{d\times
d}$ is symmetric, uniformly bounded and positive definite in
$\Omega$, and that the drift vector $\bmu \in (L^\infty
(\Omega))^d$. We will follow the usual notation for Sobolev spaces
and norms \cite{ciarlet-fem, Gilbarg-Trudinger, grisvard, gr,
brenner}. For any open bounded domain $D\subset \mathbb{R}^d$ with
Lipschitz continuous boundary, we use $\|\cdot\|_{s,D}$ and
$|\cdot|_{s,D}$ to denote the norm and seminorms in the Sobolev
space $H^s(D)$ for any $s\ge 0$, respectively. The inner product
in $H^s(D)$ is denoted by $(\cdot,\cdot)_{s,D}$. The space
$H^0(D)$ coincides with $L^2(D)$, for which the norm and the inner
product are denoted by $\|\cdot \|_{D}$ and $(\cdot,\cdot)_{D}$,
respectively. When $D=\Omega$, we shall drop the subscript $D$ in
the norm and inner product notation. For convenience, we use
``$\lesssim$ '' to denote ``less than or equal to up to a general
constant independent of the mesh size or functions appearing in
the inequality".

By a {\em weak solution} of (\ref{1}) we mean a function
$u=u(x)\in L^2(\Omega)$ satisfying
\begin{equation}\label{weakform}
(u, {\cal L}v) = - (f, v), \qquad \forall v\in H^2(\Omega)\cap
H_0^1(\Omega),
\end{equation}
where $\CL$ is a differential operator given by ${\cal L}v= \bmu
\cdot\nabla v+\frac{1}{2}\sum_{i,j=1}^da_{ij}\partial_{ji}^2v$.
The differential operator $\CL$ is assumed to satisfy the
$H^2$-regularity property in the sense that for any given $\chi\in
L^2(\Omega)$, there exists a unique strong solution $\Phi\in
H^2(\Omega)\cap H_0^1(\Omega)$ satisfying
\begin{equation}\label{EQ:H2Regularity}
\CL \Phi  = \chi,\qquad \|\Phi\|_2  \lesssim \|\chi\|.
\end{equation}
In \cite{smears}, it was shown that the regularity assumption
(\ref{EQ:H2Regularity}) holds true on bounded convex domain
$\Omega$ if $\bmu=0$ and the diffusion tensor satisfies the
following Cord\`es condition
\begin{equation}\label{cordes}
\frac{\sum_{i,j=1}^d a_{ij}^2}{(\sum_{i=1}^d a_{ii})^2} \leq
\frac{1}{d-1+\varepsilon}\qquad \mbox{in}\ \Omega,
\end{equation}
for a parameter $\varepsilon\in (0,1]$. The Cord\`es condition
(\ref{cordes}) is automatically satisfied in 2D for diffusion
tensor that is bounded, symmetric, and uniformly positive definite
in the domain, see \cite{ww2016} for a verification.

Our numerical scheme for the model problem (\ref{1}) is based on
the weak formulation (\ref{weakform}) through a weak Galerkin
approach that combines the primal variable with its dual. The dual
problem for the primal equation (\ref{weakform}) is given by
\begin{equation}\label{EQ:dual-form}
(w, \CL \rho) = 0,\qquad \forall w\in L^2(\Omega),
\end{equation}
where $\rho\in H^2(\Omega)\cap H_0^1(\Omega)$ is the dual
variable. Under the $H^2$-regularity assumption
(\ref{EQ:H2Regularity}), the solution to the dual problem
(\ref{EQ:dual-form}) is clearly trivial; i.e., $\rho\equiv 0$.
Note that the primal and the dual equations are formally
uncorrelated to each other in the continuous case; but this
changes significantly in the context of weak Galerkin finite
element methods. In the weak Galerkin approach, the differential
operator $\CL$ is discretized as
$$
\CL_w(v):= \bmu \cdot\nabla_w v+\frac{1}{2}\sum_{i,j=1}^d
a_{ij}\partial_{ji,w}^2 v,
$$
where $\nabla_w$ is a discrete weak gradient \cite{wy2013,mwy3655,
wy2707, wy3655} and $\partial_{ji,w}^2$ is a discrete weak second
order partial derivative \cite{mwyz-biharmonic,ww} (also see
Section \ref{Section:Hessian} for their definition). The
corresponding primal and dual equation then become to be
\begin{equation}\label{EQ:discrete-primal}
\mbox{discrete primal:\quad} (u_h, \CL_w v) = - (f, v),\qquad
\forall v \in V_{h,k}^0
\end{equation}
and
\begin{equation}\label{EQ:discrete-dual}
\mbox{discrete dual:\quad}( w, \CL_w \rho_h) = 0 ,\quad\qquad
\forall w \in W_{h,s},
\end{equation}
where $V_{h,k}^0$ and $W_{h,s}$ are two weak finite element spaces
used to approximate $H^2(\Omega)\cap H_0^1(\Omega)$ and
$L^2(\Omega)$ respectively. While neither
(\ref{EQ:discrete-primal}) nor (\ref{EQ:discrete-dual}) makes any
computationally feasible schemes, their combination through the
use of a suitably-defined stabilizer does provide numerical
methods that are efficient, accurate, and stable for several model
problems
\cite{ErikBurman-EllipticCauchy-SIAM01,ErikBurman-EllipticCauchy-SIAM02,ErikBurman-EllipticCauchy,
ww2016}. A formal description of the scheme reads as follows: Find
$u_h\in W_{h,s}$ and $\rho_h\in V_{h,k}^0$ such that
\begin{equation}\label{primal-dual-wg}
\begin{split}
s(\rho_h, v) + (u_h, \CL_w v) &= - (f, v),\qquad
\forall v \in V_{h,k}^0,\\
( w, \CL_w \rho_h) &= 0 ,\quad\qquad \forall
 w \in W_{h,s},
 \end{split}
\end{equation}
where $s(\cdot,\cdot)$ is a bilinear form in $V_{h,k}^0\times
V_{h,k}^0$ known as {\em stabilizer} or {\em smoother} that
enforces a certain weak continuity for the approximation $\rho_h$.
Numerical schemes in the form of (\ref{primal-dual-wg}) were named
{\em primal-dual weak Galerkin finite element methods} in
\cite{ww2016}, but were broadly called {\em stabilized finite
element methods} in
\cite{ErikBurman-EllipticCauchy-SIAM01,ErikBurman-EllipticCauchy-SIAM02,ErikBurman-EllipticCauchy}.

In the rest of the paper, we will provide all the technical
details for the numerical scheme (\ref{primal-dual-wg}), including
the construction of the finite element spaces $V_{h,k}^0$ and
$W_{h,s}$, representation of the stabilizer or smoother
$s(\cdot,\cdot)$, mathematical convergence for the corresponding
numerical approximations, and some numerical results that
demonstrate the performance of the method. One of the
distinguished features of this approach lies on ultra weak
regularity assumptions for the primal variable $u=u(x)$ in the
mathematical convergence theory. The method essentially assumes no
regularity on the primal variable so that solutions with
discontinuity can be well approximated by our primal-dual finite
element method. This work is a non-trivial extension of
\cite{ww2016} in both theory and algorithmic development.

The paper is organized as follows. In Section
\ref{Section:Hessian}, we shall briefly discuss the computation of
weak gradients and weak second order partial derivatives. In
Section \ref{Section:WGFEM}, we will present a detailed
description of the primal-dual weak Galerkin finite element method
for the Fokker-Planck type model problem (\ref{1}) based on the
weak formulation (\ref{weakform}). In Section
\ref{Section:ExistenceUniqueness}, we will study the solution
properties for our numerical method. In particular, we shall
derive an \emph{inf-sup} condition, and then establish a result on
the solution existence and uniqueness. In Section
\ref{Section:error-equations}, we will derive an error equation
for the numerical solutions. Then in Section
\ref{Section:Stability}, we will establish an error estimate for
the primal variable that is of optimal order in $L^2$. Section
\ref{Section:Estimates4L2Projections} is devoted to a presentation
of error estimates for the usual $L^2$ projections. Finally in
Section \ref{Section:numerics}, we report some numerical results
to demonstrate the performance of the primal-dual weak Galerkin
finite element method.

\section{Weak Partial Derivatives}\label{Section:Hessian}
The goal of this section is to brief the definition and
computation of the discrete weak partial derivatives introduced in
\cite{ww, wwhg, wy3655}. To this end, let $T$ be a polygonal or
polyhedral region with boundary $\partial T$. By a weak function
on $T$ we mean a triplet $v=\{v_0,v_b,\textbf{v}_g\}$ in which
$v_0\in L^2(T)$, $v_b\in L^{2}(\partial T)$ and $\textbf{v}_g\in
[L^{2}(\partial T)]^d$. The first and second components $v_0$ and
$v_b$ are intended for the value of $v$ in the interior and on the
boundary of $T$, respectively. The third component
$\textbf{v}_g=(v_{g1},\cdots,v_{gd})\in \mathbb{R}^d$ is used to
represent the gradient of $v$ on $\partial T$. In general, $v_b$
and $\textbf{v}_g$ are not required to be consistent with the
trace of $v_0$ and $\nabla v_0$ on $\partial T$. Denote by $\W(T)$
the space of all weak functions on $T$:
\begin{equation}\label{2.1}
\W(T)=\{v=\{v_0,v_b,\textbf{v}_g\}: v_0\in L^2(T), v_b\in
L^{2}(\partial T), \textbf{v}_g\in [L^{2}(\partial T)]^d\}.
\end{equation}

For any $v\in \W(T)$, the weak second order partial derivative
$\partial^2_{ij}v$, denoted as $\partial^2_{ij,w} v$, is defined
as a linear functional in the dual space of $H^2(T)$ satisfying
 \begin{equation}\label{2.3}
 (\!(\partial^2_{ij,w}v,\varphi)\!)_T=(v_0,\partial^2_{ji}\varphi)_T-
 \langle v_b n_i,\partial_j\varphi\rangle_{\partial T}+
 \langle v_{gi},\varphi n_j\rangle_{\partial T},
 \end{equation}
for all $\varphi\in H^2(T)$. Here, $(\!( \chi, \varphi)\!)_T$
stands for the action of $\chi$ at $\varphi\in H^2(T)$,
$\textbf{n}=(n_1,\cdots,n_d)$ is the unit outward normal direction
to $\partial T$, $(\cdot,\cdot)_T$ is the usual $L^2$ inner
product in $L^2(T)$, and $\langle \cdot, \cdot\rangle_{\pT}$ is
the $L^2$ inner product in $L^2(\pT)$.

The weak gradient of $v\in \W(T)$, denoted by $\nabla_w v$, is
defined as a linear functional in the dual space of $[H^1(T)]^d$
such that
\begin{equation*}
\action{\nabla_w v}{\boldsymbol{\psi}}
=-(v_0,\nabla \cdot
\boldsymbol{\psi})_T+\langle v_b,\boldsymbol{\psi}\cdot
\textbf{n}\rangle_{\partial T},
\end{equation*}
for all $\boldsymbol{\psi}\in [H^1(T)]^d$. Note that the weak
gradient makes no use of the third component of the weak function
$v$.

Denote by $P_r(T)$ the set of polynomials on $T$ with degree no
more than $r$. A discrete version of $\partial^2_{ij,w} v$ for
$v\in \W(T)$, denoted by $\partial^2_{ij,w,r,T} v$, is defined as
the unique polynomial
 in $P_r(T)$ satisfying
  \begin{equation}\label{2.4}
 (\partial^2_{ij,w,r,T}v,\varphi)_T=(v_0,\partial^2
 _{ji}\varphi)_T-\langle v_b n_i,\partial_j\varphi\rangle_{\partial T}
 +\langle v_{gi},\varphi n_j\rangle_{\partial T},
 \end{equation}
for all $ \varphi \in
 P_r(T)$, which, by using integration by parts, yields
  \begin{equation}\label{2.4*}
 (\partial^2_{ij,w,r,T}v,\varphi)_T=(\partial^2
 _{ ij}v_0,\varphi)_T+\langle (v_0-v_b) n_i,\partial_j\varphi\rangle_{\partial T}
 -\langle \partial_i v_0-v_{gi},\varphi n_j\rangle_{\partial T},
 \end{equation}
for all $ \varphi \in P_r(T)$.

A discrete form of $\nabla_{w} v$ for $v\in \W(T)$, denoted
by $\nabla_{w,r,T}v$, is defined as the unique polynomial vector
in $[P_r(T) ]^d$ satisfying
\begin{equation}\label{disgradient}
(\nabla_{w,r,T}  v, \boldsymbol{\psi})_T=-(v_0,\nabla \cdot
\boldsymbol{\psi})_T+\langle v_b, \boldsymbol{\psi} \cdot
\textbf{n}\rangle_{\partial T}, \quad\forall\boldsymbol{\psi}\in
[P_r(T)]^d,
\end{equation}
 which, from integration by parts, gives
\begin{equation}\label{disgradient*}
(\nabla_{w,r,T}  v, \boldsymbol{\psi})_T= (\nabla v_0,
\boldsymbol{\psi})_T-\langle v_0- v_b, \boldsymbol{\psi} \cdot
\textbf{n}\rangle_{\partial T}, \quad\forall\boldsymbol{\psi}\in
[P_r(T)]^d,
\end{equation}
provided that $v_0$ is sufficiently regular.

\section{Numerical Schemes}\label{Section:WGFEM}
The goal of this section is to present a finite element method for
the variational problem (\ref{weakform}). To this end, let ${\cal
T}_h$ be a finite element partition of the domain $\Omega$ into
polygons in 2D or polyhedra in 3D which is shape regular in the
sense as described in \cite{wy3655}. For three dimensional
domains, all the polyhedral elements are assumed to have flat
faces. Denote by ${\mathcal E}_h$ the set of all edges or faces in
${\cal T}_h$ and  ${\mathcal E}_h^0={\mathcal E}_h \setminus
\partial\Omega$ the set of all interior edges or faces.
Denote by $h_T$ the meshsize of $T\in {\cal T}_h$ and
$h=\max_{T\in {\cal T}_h}h_T$ the meshsize of the partition
${\cal T}_h$.

Let $k\geq 1$ be a given integer. Denote by
$V_k(T)$ the discrete local weak function space on $T$ given by
$$
V_k(T)=\{\{v_0,v_b,\textbf{v}_g\}:\ \ v_0\in P_k(T),\ v_b\in
P_k(e),\ \textbf{v}_g\in [P_{k-1}(e)]^d,\ e\subset \partial T\}.
$$
By patching $V_k(T)$ over all the elements $T\in {\cal T}_h$
through a common value $v_b$ and $\textbf{v}_g$ on the interior
edges/faces, we obtain a global weak finite element space
$V_{h,k}$:
$$
V_{h,k}=\big\{\{v_0,v_b,\textbf{v}_g\}:\ \ \{v_0,v_b,\textbf{v}_g\}|_T\in
V_k(T), \ T\in {\cal T}_h
 \big\}.
$$
Denote by $V_{h,k}^0$ the subspace of $V_{h,k}$ with vanishing
boundary value for $v_b$ on $\partial \Omega$,  i.e.,
$$
V_{h,k}^0=\big\{\{v_0,v_b,\textbf{v}_g\}\in V_{h,k}:\ \   v_b|_e=0, \ e\subset \partial\Omega
 \big\}.
$$

For any given integer $s\ge 0$, denote by $W_{h,s}$ the usual
finite element space consisting of piecewise polynomials of degree
$s$; i.e.,
$$
W_{h,s}=\{w:\ \  w|_T\in P_{s}(T),\ T\in {\cal T}_h\}.
$$
For application in the approximation of (\ref{weakform}), the
integer $s$ will be chosen as either $s=k-1$ or $s=k-2$. In the
case of $k=1$, the only viable option for this integer would be
$s=0$.

For simplicity of notation, denote by $\nabla_{w}\sigma$ the discrete weak gradient
$\nabla_{w,k-1,T}\sigma$ computed by using (\ref{disgradient}) on each element $T$ with $r=k-1$:
$$
(\nabla_{w}\sigma)|_T= \nabla_{w,k-1,T}(\sigma|_T), \qquad \sigma\in V_{h,k}.
$$
Analogously, we use $\partial^2_{ij, w}\sigma$ to denote the discrete weak second order partial derivative $\partial^2_{ij,w,s,T}\sigma$ computed by using (\ref{2.4}) on each element $T$ with $r=s$:
$$
(\partial^2_{ij, w} \sigma)|_T=\partial^2_{ij,w,s,T}(\sigma|_T),\ \ \ \sigma\in V_{h,k}.
$$
The corresponding weak differential operator is defined by using weak partial derivatives as follows
 $$
{\cal L}_w(\sigma) = \mu\cdot \nabla_w\sigma+\frac{1}{2}\sum_{i,j=1}^da_{ij}\partial_{ji,w}^2\sigma,
$$
for any $\sigma\in V_{h,k}$.

Let us introduce the following bilinear forms
\begin{align*}
s(\rho, \sigma)=&\sum_{T\in {\cal T}_h}s_T(  \rho, \sigma),\qquad \rho, \sigma\in V_{h,k},\\
b(\sigma, v)=&\sum_{T\in {\cal T}_h}b_T(\sigma, v), \qquad \sigma\in V_{h,k},\ v\in W_{h,s},
\end{align*}
where
\begin{equation}\label{EQ:local-stabilizer}
 \begin{split}
 s_T( \rho, \sigma)=&h_T^{-3}\int_{\partial T}
(\rho_0-\rho_b)(\sigma_0-\sigma_b)ds \\
& + h_T^{-1}\int_{\partial T}(\nabla \rho_0 -\brho_g ) (\nabla \sigma_0 -\bsigma_g )ds\\
&+\delta\int_T{\cal L}(\rho_0){\cal L}(\sigma_0) dT
\end{split}
\end{equation}
and
$$
b_T(\sigma, v)=(v, {\cal L}_w(\sigma) )_T.
$$
Here, $\delta >0 $ is a parameter independent of the meshsize $h$ and the functions involved.

We are now in a position to state our primal-dual weak Galerkin
finite element scheme for the model variational problem
(\ref{weakform}).
 \begin{algorithm} Let $k\ge 1$ be a given integer and $s\ge 0$ be another integer.
A numerical approximation for the solution of (\ref{weakform}) is
the component $u_h$ in $(u_h;\rho_h)\in W_{h,s} \times V^0_{h,k}$
satisfying
 \begin{eqnarray}\label{32}
s( \rho_h, \sigma)+b(\sigma, u_h)&=&-(f, \sigma_0),\qquad \forall\sigma\in V^0_{h,k},\\
b(\rho_h, v)&=&0,\quad\qquad\qquad \forall v\in W_{h,s}.\label{2}
\end{eqnarray}
\end{algorithm}

\section{Solution Existence, Uniqueness, and Stability}\label{Section:ExistenceUniqueness}
The goal of this section is to study the solution for the numerical scheme (\ref{32})-(\ref{2}). In particular, we shall prove the existence and uniqueness for the numerical solution under certain assumptions on the finite element partition $\T_h$ and the differential operator ${\cal L}$.

On each element $T\in\T_h$, denote by $Q_0$ the $L^2$ projection onto
$P_k(T)$, $k\geq 1$. Similarly, on each edge or face $e\subset\partial T$,
denote by $Q_b$ and $\textbf{Q}_g:=(Q_{g1},\cdots, Q_{gd})$ the $L^2$ projections onto $P_{k}(e)$ and
$[P_{k-1}(e)]^d$, respectively. For any $w\in H^2(\Omega)$, define the projection $Q_h w\in V_{h,k}$ so that on each element $T$ one has
\begin{equation}\label{EQ:OperatorQh}
Q_hw=\{Q_0w,Q_bw,\textbf{Q}_g(\nabla w)\}.
\end{equation}
Denote by $\CQ_h^{(s)}$ the $L^2$ projection onto $W_{h,s}$ - the space of piecewise polynomials of degree $s\ge 0$. In the rest of this paper, the integer $s$ will be taken as either $s=k-1$ or $s=k-2$.

\begin{lemma}\label{Lemma5.1} \cite{ww,wwhg,wy3655} The aforementioned projection operators satisfy the following commutative properties: For any $w\in H^2(T)$, one has
\begin{align}\label{l}
\partial^2_{ij,w}(Q_h w) = &{\cal Q}^{(s)}_h(\partial^2_{ij} w),\qquad i,j=1,\ldots,d,\\
 \nabla_{ w}(Q_h w) = &{\cal Q}^{(k-1)}_h( \nabla w).\label{l-2}
\end{align}
\end{lemma}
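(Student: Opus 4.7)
The plan is to verify each identity by testing both sides against an arbitrary polynomial in the appropriate space and using the defining relations (\ref{2.4}) and (\ref{disgradient}) together with integration by parts. The crucial checks will be purely about polynomial degrees: we need to make sure that when the definition of $\partial^2_{ij,w}(Q_h w)$ or $\nabla_w(Q_hw)$ is unfolded, every polynomial appearing against an $L^2$ projection ($Q_0$, $Q_b$, or $\mathbf{Q}_g$) already lies in the range of that projection, so the projection can be removed.

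For the second-order partial derivative identity, I would fix a test function $\varphi\in P_s(T)$ and start from (\ref{2.4}) applied to $v=Q_hw=\{Q_0w,Q_bw,\mathbf{Q}_g(\nabla w)\}$ with $r=s$, giving
\begin{equation*}
(\partial^2_{ij,w}(Q_hw),\varphi)_T=(Q_0w,\partial^2_{ji}\varphi)_T-\langle Q_bw\,n_i,\partial_j\varphi\rangle_{\partial T}+\langle Q_{gi}(\partial_i w),\varphi n_j\rangle_{\partial T}.
\end{equation*}
Since $s\le k-1$, the polynomial $\partial^2_{ji}\varphi$ lies in $P_k(T)$, $\partial_j\varphi\cdot n_i$ lies in $P_k(e)$ on each flat face $e$, and $\varphi\, n_j$ lies in $P_{k-1}(e)$; each of the three projections can therefore be dropped, leaving
\begin{equation*}
(\partial^2_{ij,w}(Q_hw),\varphi)_T=(w,\partial^2_{ji}\varphi)_T-\langle w\,n_i,\partial_j\varphi\rangle_{\partial T}+\langle \partial_i w,\varphi n_j\rangle_{\partial T}.
\end{equation*}
Two successive integrations by parts on $(\partial^2_{ij}w,\varphi)_T$ (valid since $w\in H^2(T)$) produce exactly the right-hand side above, so $(\partial^2_{ij,w}(Q_hw),\varphi)_T=(\partial^2_{ij}w,\varphi)_T=(\mathcal{Q}^{(s)}_h\partial^2_{ij}w,\varphi)_T$ for every $\varphi\in P_s(T)$, which yields (\ref{l}).

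The weak-gradient identity follows the same pattern but is simpler. Fix $\boldsymbol{\psi}\in [P_{k-1}(T)]^d$ and apply (\ref{disgradient}) with $r=k-1$ to $Q_hw$:
\begin{equation*}
(\nabla_w(Q_hw),\boldsymbol{\psi})_T=-(Q_0w,\nabla\cdot\boldsymbol{\psi})_T+\langle Q_bw,\boldsymbol{\psi}\cdot\mathbf{n}\rangle_{\partial T}.
\end{equation*}
Here $\nabla\cdot\boldsymbol{\psi}\in P_{k-2}(T)\subset P_k(T)$ and $\boldsymbol{\psi}\cdot\mathbf{n}\in P_{k-1}(e)\subset P_k(e)$, so $Q_0$ and $Q_b$ may be dropped. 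A single integration by parts then gives $(\nabla w,\boldsymbol{\psi})_T$ on the right, which equals $(\mathcal{Q}^{(k-1)}_h\nabla w,\boldsymbol{\psi})_T$ because $\boldsymbol{\psi}\in[P_{k-1}(T)]^d$. This proves (\ref{l-2}).

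The main obstacle is really only bookkeeping of polynomial degrees and the convention that faces are flat, so that the components of $\mathbf{n}$ are constant on each $e$; with $s\in\{k-2,k-1\}$ and the weak-function degrees chosen as in $V_k(T)$, all inclusions line up. No regularity beyond $w\in H^2(T)$ is needed, since that is exactly what is required to justify the two integrations by parts in the Hessian step.
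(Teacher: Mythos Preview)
Your proof is correct and follows the same approach as the paper: testing against an arbitrary $\varphi\in P_s(T)$ (respectively $\boldsymbol{\psi}\in[P_{k-1}(T)]^d$), using the defining relations (\ref{2.4}) and (\ref{disgradient}), dropping the projections by the $L^2$-projection property, and closing with integration by parts. The paper's write-up is terser---it simply invokes ``the usual property of $L^2$ projections'' and leaves (\ref{l-2}) to the references---whereas you make the polynomial-degree bookkeeping explicit, but the argument is the same.
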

\begin{proof} For any $\varphi\in  P_{s}(T)$ and  $w\in H^2(T)$, from (\ref{2.4}), the usual property of $L^2$ projections, and the integration by parts, we have
\begin{equation*}
\begin{split}
(\partial^2_{ij,w}(Q_h w),\varphi)_T&=(Q_0
w,\partial^2_{ji}\varphi)_T -\langle Q_b w,\partial_j \varphi\cdot
n_i\rangle_{\partial T}+
\langle Q_{gi}(\partial_i w)\cdot n_j,\varphi\rangle_{\partial T}\\
&=(w,\partial^2_{ji}\varphi)_T-\langle w,\partial_j
 \varphi\cdot n_i\rangle_{\partial T}+
 \langle \partial_i w\cdot n_j,\varphi\rangle_{\partial T}\\
&=(\partial^2_{ij}w,\varphi)_T\\
&=({\cal Q}^{(s)}_h\partial^2_{ij}w,\varphi)_T,
\end{split}
\end{equation*}
which completes the proof of (\ref{l}). The other identity (\ref{l-2}) can be derived in a similar fashion, and details can be found in \cite{ww,wwhg,wy3655}.
\end{proof}

The stabilizer $s(\cdot,\cdot)$ defined through (\ref{EQ:local-stabilizer}) naturally induces a semi-norm in the weak finite element space $V_{h,k}$ as follows:
\begin{equation}\label{norm-new}
\3bar \rho  \3bar_w=
s(\rho, \rho)^{\frac{1}{2}},\qquad  \rho\in V_{h,k}.
\end{equation}

\begin{lemma}\label{lem3-new} ({\it inf-sup} condition) Assume that the drift term $\mu\in L^\infty(\Omega)$ and the coefficient tensor $a(x)$ is uniformly piecewise continuous with respect to the finite element partition $\T_h$. Then, there exists a constant $\beta>0$ such that
for any $v\in W_{h,s}$, there exists a weak function $\sigma\in V_{h,k}^0$ satisfying
\begin{eqnarray}\label{EQ:inf-sup-condition-01}
b(v,\sigma) & \geq & \frac12 \|v\|^2,\\
\3bar \sigma \3bar_w & \leq & \beta \|v\|, \label{EQ:inf-sup-condition-02}
\end{eqnarray}
provided that meshsize satisfies $h\le h_0$ for a small, but fixed parameter value $h_0>0$.
\end{lemma}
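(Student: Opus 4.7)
The approach will be to construct the witness $\sigma$ from the solution of the dual problem via the $H^2$-regularity assumption (\ref{EQ:H2Regularity}). Given $v\in W_{h,s}\subset L^2(\Omega)$, let $\Phi\in H^2(\Omega)\cap H_0^1(\Omega)$ be the unique function satisfying $\CL\Phi=v$ and $\|\Phi\|_2\lesssim \|v\|$. Set $\sigma := Q_h\Phi$ as in (\ref{EQ:OperatorQh}). Because $\Phi|_{\partial\Omega}=0$ and $Q_b$ is an $L^2$ projection, the boundary component $\sigma_b$ vanishes on $\partial\Omega$, so $\sigma\in V_{h,k}^0$; this will be the candidate for both inequalities.

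For the lower bound (\ref{EQ:inf-sup-condition-01}), I would apply the commutativity identities (\ref{l})--(\ref{l-2}) of Lemma \ref{Lemma5.1} to recast
\begin{equation*}
\CL_w(Q_h\Phi) = \bmu\cdot\CQ_h^{(k-1)}(\nabla\Phi) + \tfrac12\sum_{i,j=1}^d a_{ij}\,\CQ_h^{(s)}(\partial^2_{ji}\Phi),
\end{equation*}
and compare with $(v,\CL\Phi)=\|v\|^2$. A term-by-term subtraction produces
\begin{equation*}
b(v,\sigma)-\|v\|^2 = \sum_{T\in\T_h}(v\bmu,\CQ_h^{(k-1)}\nabla\Phi - \nabla\Phi)_T + \tfrac12\sum_{i,j}\sum_{T\in\T_h}(va_{ij},\CQ_h^{(s)}\partial^2_{ji}\Phi - \partial^2_{ji}\Phi)_T.
\end{equation*}
The drift remainder is handled directly by Cauchy--Schwarz together with the standard $L^2$-approximation $\|\CQ_h^{(k-1)}\nabla\Phi - \nabla\Phi\|\lesssim h\|\Phi\|_2$ (available since $\nabla\Phi\in H^1(\Omega)$), giving a bound $\lesssim h\|\bmu\|_\infty\|v\|^2$.

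The Hessian remainder is the main obstacle: since $\partial^2_{ji}\Phi$ lies only in $L^2$, the projection $\CQ_h^{(s)}\partial^2_{ji}\Phi$ carries no $h$-rate of approximation. My workaround is to exploit the uniform piecewise continuity of $a$ by writing $a_{ij}=\bar a^T_{ij}+(a_{ij}-\bar a^T_{ij})$ on each $T$, where $\bar a^T_{ij}$ is the elementwise mean. Since $v\bar a^T_{ij}\in P_s(T)$, the orthogonality of $\CQ_h^{(s)}$ eliminates the leading contribution, leaving
\begin{equation*}
(va_{ij},\CQ_h^{(s)}\partial^2_{ji}\Phi - \partial^2_{ji}\Phi)_T = (v(a_{ij}-\bar a^T_{ij}),\CQ_h^{(s)}\partial^2_{ji}\Phi - \partial^2_{ji}\Phi)_T,
\end{equation*}
which is bounded by $\omega(h_T)\|v\|_T\|\Phi\|_{2,T}$, where $\omega(h)\to 0$ is the modulus of continuity supplied by the hypothesis on $a$. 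Summing yields a global bound of order $\omega(h)\|v\|^2$. Combining with the drift estimate, $|b(v,\sigma)-\|v\|^2|\lesssim (h+\omega(h))\|v\|^2$, which is $\leq \tfrac12\|v\|^2$ once $h\leq h_0$ is chosen sufficiently small; inequality (\ref{EQ:inf-sup-condition-01}) follows.

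For the upper bound (\ref{EQ:inf-sup-condition-02}), I would estimate the three contributions to $s_T(Q_h\Phi,Q_h\Phi)$ from (\ref{EQ:local-stabilizer}) one by one. The two edge terms simplify through the observation that $Q_0\Phi|_e\in P_k(e)$ and $\nabla Q_0\Phi|_e\in [P_{k-1}(e)]^d$: idempotence of $Q_b$ and $\mathbf{Q}_g$ on these spaces reduces them to the scaled projection errors $h_T^{-3}\|Q_0\Phi - \Phi\|_{\partial T}^2$ and $h_T^{-1}\|\nabla Q_0\Phi - \nabla\Phi\|_{\partial T}^2$, each bounded by $\|\Phi\|_{2,T}^2$ via the scaled trace inequality and optimal-order $L^2$-approximation of $Q_0$. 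The volume term $\delta\int_T|\CL(Q_0\Phi)|^2$ is handled with the $L^\infty$ bounds on $\bmu$ and $a_{ij}$ together with $H^2$-stability of $Q_0$, yielding again a bound by $\|\Phi\|_{2,T}^2$. Summing over $T$ and invoking $\|\Phi\|_2\lesssim \|v\|$ delivers $\3bar Q_h\Phi\3bar_w\lesssim \|v\|$, which is (\ref{EQ:inf-sup-condition-02}) with $\beta$ depending on $\delta$, $\|\bmu\|_\infty$, $\|a\|_\infty$, and the $H^2$-regularity constant.
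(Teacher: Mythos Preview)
Your proof is correct and follows essentially the same route as the paper: construct $\sigma=Q_h\Phi$ from the $H^2$-regular solution of $\CL\Phi=v$, use the commutative identities (\ref{l})--(\ref{l-2}) together with the elementwise-mean splitting $a_{ij}=\bar a_{ij}+(a_{ij}-\bar a_{ij})$ to control the Hessian remainder by a modulus of continuity, and bound $\3bar Q_h\Phi\3bar_w$ via the trace inequality and $H^2$-stability of $Q_0$. The only cosmetic difference is that you justify $\|Q_0\Phi-Q_b\Phi\|_{\partial T}\le\|Q_0\Phi-\Phi\|_{\partial T}$ through idempotence of $Q_b$ on $P_k(e)$, whereas the paper invokes the best-approximation property of $Q_b$ directly; these are equivalent.
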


\begin{proof}
Let $\Phi$ be the solution of the following auxiliary problem:
\begin{eqnarray}\label{pro-new}
{\cal L}\Phi & = & v, \qquad \text{in}\ \Omega,\\
\Phi & = & 0, \qquad\text{on}\ \partial\Omega.\label{pro0-new}
\end{eqnarray}
From the assumption (\ref{EQ:H2Regularity}), the problem
(\ref{pro-new})-(\ref{pro0-new}) has the following
$H^2$-regularity estimate
\begin{equation}\label{regu2}
\|\Phi\|_{2}\leq C\|v\|.
\end{equation}
With $\sigma=Q_h\Phi$, we have from Lemma \ref{Lemma5.1} that
\begin{equation}\label{EQ:April05:100}
\begin{split}
b(v,\sigma) = & \sum_{T\in {\cal T}_h}(v, {\cal L}_w (Q_h\Phi))_T \\
=& \sum_{T\in {\cal T}_h}(\bmu v, \nabla_w Q_h\Phi)_T+
\frac{1}{2} \sum_{i,j=1}^d(a_{ij} v,\partial_{ji,w}^2Q_h\Phi)_T\\
=&\sum_{T\in {\cal T}_h}(\bmu v, {\cal Q}^{(k-1)}_h(\nabla \Phi))_T+
\frac{1}{2} \sum_{i,j=1}^d(a_{ij} v,{\cal Q}^{(s)}_h\partial_{ji}^2\Phi)_T\\
=&\sum_{T\in {\cal T}_h}(\bmu v, \nabla \Phi)_T+ \frac{1}{2}
\sum_{i,j=1}^d(a_{ij}v, \partial_{ji}^2\Phi)_T\\
&+(\bmu v, ( {\cal Q}^{(k-1)}_h-I) \nabla \Phi )_T+
\frac{1}{2} \sum_{i,j=1}^d(a_{ij}v,({\cal Q}^{(s)}_h-I)\partial_{ji}^2\Phi)_T\\
=&\sum_{T\in {\cal T}_h}(v,v)_T+\sum_{T\in {\cal T}_h}
 (\bmu v, ( {\cal Q}^{(k-1)} _h-I) \nabla \Phi)_T\\
 &+\frac{1}{2}\sum_{i,j=1}^d ((a_{ij}-\bar{a}_{ij})v,
 ({\cal Q}^{(s)}_h-I)\partial_{ij}^2 \Phi)_T,
\end{split}
\end{equation}
where $\bar{a}_{ij}$ stands for the average of
$a_{ij}$ on each element $T\in {\cal T}_h$. As the coefficient tensor $a(x)=(a_{ij}(x))_{d\times d}$ is uniformly piecewise continuous in $\Omega$, there exists a small parameter $\varepsilon(h)$ depending on the meshsize $h$ and the continuity of $a_{ij}$ on each element $T$ such that
\begin{equation*}
\begin{split}
&\left|\sum_{T\in {\cal T}_h} (\bmu v, ( {\cal Q}^{(k-1)} _h-I) \nabla \Phi)_T\right|
\leq C h \|\Phi\|_{2}\|v\|\\
&\left|\sum_{i,j=1}^d ((a_{ij}-\bar{a}_{ij})v,({\cal Q}^{(s)}_h-I)\partial_{ij}^2 \Phi)_T\right| \leq C \varepsilon(h) \|\Phi\|_{2}\|v\|.
\end{split}
\end{equation*}
Substituting the above estimates into (\ref{EQ:April05:100}) yields
\begin{equation}\label{EQ:April05:101}
\begin{split}
b(v,\sigma) & \geq \|v\|^2-C(h+\varepsilon(h)) \|\Phi\|_{2}\|v\|\\
& \geq (1-C h - C\varepsilon(h))\|v\|^2,
\end{split}
\end{equation}
where we have used the regularity estimate (\ref{regu2}). In particular, for $\varepsilon_0=\frac{1}{2C}$, there exists a parameter value $h_0$ such that $ h+\varepsilon(h)\leq \varepsilon_0$ when $h\le h_0$. It follows that $(h+\varepsilon(h)) C \le \frac12$ holds true for $h\le h_0$, and hence
\begin{equation}\label{EQ:April05:102}
b(v,\sigma) \geq \frac12 \|v\|^2,
\end{equation}
which verifies the inequality (\ref{EQ:inf-sup-condition-01}).

It remains to establish the estimate (\ref{EQ:inf-sup-condition-02}) for $\sigma=Q_h\Phi$. To this end, from the usual trace inequality we have
\begin{equation}\label{EQ:Estimate:002}
\begin{split}
&\sum_{T\in {\cal T}_h }h_T^{-3}\int_{\partial
T}|\sigma_0-\sigma_b|^2ds\\
=&\sum_{T\in {\cal T}_h}
h_T^{-3}\int_{\partial T}|Q_0\Phi-Q_b \Phi|^2ds\\
\leq &\sum_{T\in {\cal T}_h} h_T^{-3}\int_{\partial T}|Q_0\Phi-\Phi|^2ds\\
\lesssim & \sum_{T\in {\cal T}_h} h_T^{-4}\int_{T}|Q_0\Phi-\Phi|^2dT
+h_T^{-2}\int_{T}|\nabla Q_0\Phi-\nabla \Phi|^2dT\\
\lesssim & \|\Phi\|_{2}^2 \lesssim \|v\|^2.
 \end{split}
\end{equation}
A similar analysis can be applied to yield the following estimate:
\begin{equation}\label{EQ:Estimate:003}
\begin{split}
\sum_{T\in {\cal T}_h }h_T^{-1}\int_{\partial T}|\nabla
\sigma_0-\bsigma_g|^2ds \lesssim & \|v\|^2.
 \end{split}
\end{equation}
Furthermore, we have
\begin{equation}\label{EQ:Estimate:004}
\begin{split}
\sum_{T\in {\cal T}_h } \delta \int_{T}| {\cal L}(\sigma_0)|^2dT
\lesssim &  \sum_{T\in {\cal T}_h } \| \sigma_0\|_{2,T}^2\\
\lesssim & \sum_{T\in {\cal T}_h } \| Q_0 \Phi\|_{2,T}^2\\
\lesssim &  \|\Phi\|_{2 }^2
\lesssim
\|v\|^2.
 \end{split}
\end{equation}
Finally, by combining the estimates (\ref{EQ:Estimate:002})-(\ref{EQ:Estimate:004}) and the definition of $\3bar \sigma \3bar_w$ we obtain
\begin{equation*}\label{barv}
  \3bar \sigma\3bar_w^2 \leq \beta^2 \|v\|^2
\end{equation*}
for some constant $\beta$. This completes the proof of the lemma.
\end{proof}

We are now in a position to state the main result on solution existence and uniqueness.

\begin{theorem}\label{thmunique1} Assume that the drift vector $\bmu$ and the coefficient tensor $a(x)$ are uniformly piecewise continuous in $\Omega$ with respect to the finite element partition ${\cal T}_h$. Then, there exists a fixed $h_0>0$ such that the primal-dual weak Galerkin finite element algorithm (\ref{32})-(\ref{2}) has one and only one solution if the meshsize satisfies $h\le h_0$.
\end{theorem}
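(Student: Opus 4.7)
The scheme (\ref{32})--(\ref{2}) is a square finite-dimensional linear system in the unknowns $(u_h,\rho_h)\in W_{h,s}\times V_{h,k}^0$ with one test equation per degree of freedom, so existence will follow from uniqueness. My plan is therefore to set $f=0$ and prove that the only solution is the trivial one, in two stages: first I will show $\rho_h=0$ by exploiting the stabilizer together with the $H^2$-regularity assumption (\ref{EQ:H2Regularity}), and then I will show $u_h=0$ by invoking the \emph{inf-sup} condition of Lemma \ref{lem3-new}.

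For the first stage, I will take $\sigma=\rho_h$ in (\ref{32}) and $v=u_h$ in (\ref{2}). Because $b(\rho_h,u_h)=0$ from the dual equation, (\ref{32}) collapses to $s(\rho_h,\rho_h)=0$, i.e.\ $\3bar\rho_h\3bar_w=0$. Reading off the three contributions in (\ref{EQ:local-stabilizer}), this forces $\rho_0=\rho_b$ and $\nabla\rho_0=\brho_g$ on every $\partial T$, and $\CL(\rho_0)=0$ on each element $T$.

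The main obstacle is turning these element-level identities into a global statement $\rho_h\equiv 0$. The point is that $\rho_b$ and $\brho_g$ are single-valued on interior edges/faces by construction of $V_{h,k}$, so the trace identities $\rho_0=\rho_b$ and $\nabla\rho_0=\brho_g$ force $\rho_0$ to be globally continuous with continuous gradient across $\mathcal{E}_h^0$; combined with the homogeneous boundary condition $\rho_b|_{\partial\Omega}=0$ built into $V_{h,k}^0$, this promotes $\rho_0$ to an element of $H^2(\Omega)\cap H_0^1(\Omega)$. Since $\rho_0$ is then piecewise polynomial with the required global regularity and $\CL(\rho_0)=0$ on each $T$, we have $\CL\rho_0=0$ in $L^2(\Omega)$. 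Applying (\ref{EQ:H2Regularity}) with $\chi=0$ yields $\rho_0\equiv 0$, and the trace relations immediately give $\rho_b=0$ and $\brho_g=0$, so $\rho_h=0$.

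With $\rho_h=0$, equation (\ref{32}) reduces to $b(\sigma,u_h)=0$ for every $\sigma\in V_{h,k}^0$. Provided $h\le h_0$, Lemma \ref{lem3-new} applied to $v=u_h\in W_{h,s}$ produces a test function $\sigma\in V_{h,k}^0$ with $b(\sigma,u_h)\ge\tfrac12\|u_h\|^2$, forcing $u_h=0$. Uniqueness (and hence existence) of the discrete solution then follows, completing the proof under the same mesh restriction $h\le h_0$ inherited from Lemma \ref{lem3-new}.
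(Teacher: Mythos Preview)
Your proposal is correct and follows essentially the same approach as the paper's proof: reduce to uniqueness for the homogeneous system, test with $(\sigma,v)=(\rho_h,u_h)$ to obtain $s(\rho_h,\rho_h)=0$, use the resulting trace identities plus ${\cal L}\rho_0=0$ and the $H^2$-regularity (\ref{EQ:H2Regularity}) to conclude $\rho_h=0$, and then invoke the \emph{inf-sup} condition of Lemma~\ref{lem3-new} to force $u_h=0$. Your write-up is in fact slightly more explicit than the paper's in justifying why the piecewise polynomial $\rho_0$ lands in $H^2(\Omega)\cap H_0^1(\Omega)$ (the paper simply asserts $\rho_0\in C_0^1(\Omega)$), but the logical structure is identical.
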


\begin{proof} It suffices to show that zero is the only solution to the problem (\ref{32})-(\ref{2}) with homogeneous data $f=0$ and $g=0$. To this end, assume $f=0$ and $g=0$ in (\ref{32})-(\ref{2}). By choosing $v=u_h$ and $\sigma=\rho_h$, the difference of (\ref{2}) and (\ref{32}) gives
$s(\rho_h,\rho_h)=0$, which implies $\rho_0=\rho_b$ and $\nabla \rho_0=\brho_g$ on each
$\partial T$, and hence $\rho_0\in C^1_0(\Omega)$. Moreover, we have
$$
{\cal L}\rho_0=0.
$$
Thus, from the solution existence and uniqueness for the differential operator ${\cal L}$ with homogeneous Dirichlet boundary condition we obtain $\rho_0\equiv 0$, and hence $\rho_h\equiv 0$.

With $\rho_h=0$, the equation (\ref{32}) now becomes
\begin{equation}\label{rew}
b(u_h, \tau) = 0,\qquad \forall \tau\in V_{h,k}^0.
\end{equation}
From Lemma \ref{lem3-new}, there exists a $\sigma\in V_{h,k}^0$ such that
$b(u_h, \sigma) \ge \frac12 \|u_h\|^2$. It then follows from (\ref{rew}) that $u_h\equiv 0$.
This completes the proof of the theorem.
\end{proof}

\section{Error Equations}\label{Section:error-equations}
In this section we shall derive an error equation for the numerical solution arising from the primal-dual weak Galerkin finite element algorithm (\ref{32})-(\ref{2}). To this end, let $u$ and $(u_h;\rho_h) \in W_{h,s}\times V_{h,k}^0$ be the solution of (\ref{weakform}) and (\ref{32})-(\ref{2}), respectively. Note that $\rho_h$ is supposed to approximate the trivial function $\rho=0$.

\begin{lemma}\label{Lemma:LocalEQ} For any $\sigma\in V_{h,k}$ and $v\in W_{h,s}$, the following identity holds true:
\begin{equation}
({\cal L}_w \sigma, v)_T = ({\cal L} \sigma_0, v)_T +R_T(\sigma, v),
\end{equation}
where
\begin{equation}\label{EQ:April:06:100}
\begin{split}
R_T(\sigma,v) = & \frac{1}{2}\sum_{i,j=1}^d\langle \sigma_0-\sigma_b, n_j
\partial_i({\cal Q}_h^{(s)}(a_{ij} v))\rangle_{\partial T}\\
&-\frac{1}{2}\sum_{i,j=1}^d\langle
\partial_j\sigma_0-\sigma_{gj}, n_i {\cal Q}^{(s)}_h(a_{ij} v)\rangle_{\partial T}\\
& - \langle \sigma_0-\sigma_b, {\cal Q}^{(k-1)}_h(\bmu
v)\cdot \bn\rangle_{\partial T}.
\end{split}
\end{equation}
\end{lemma}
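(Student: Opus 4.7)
The plan is to reduce $({\cal L}_w\sigma,v)_T$ term by term using the integration-by-parts forms (\ref{2.4*}) and (\ref{disgradient*}) of the discrete weak partial derivatives, combined with the elementary fact that $L^2$ projections can be inserted freely whenever the companion factor is a polynomial of sufficiently low degree. More precisely, since $\nabla_w\sigma \in [P_{k-1}(T)]^d$ on each element, we have
\[
(\bmu\cdot\nabla_w\sigma,\,v)_T \;=\; (\nabla_w\sigma,\,\bmu v)_T \;=\; \bigl(\nabla_w\sigma,\,{\cal Q}_h^{(k-1)}(\bmu v)\bigr)_T,
\]
and analogously, because $\partial^2_{ji,w}\sigma\in P_s(T)$,
\[
(a_{ij}\,\partial^2_{ji,w}\sigma,\,v)_T \;=\; \bigl(\partial^2_{ji,w}\sigma,\,{\cal Q}_h^{(s)}(a_{ij}v)\bigr)_T.
\]

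For the drift contribution, I apply (\ref{disgradient*}) with $\boldsymbol{\psi}={\cal Q}_h^{(k-1)}(\bmu v)$ to obtain
\[
(\bmu\cdot\nabla_w\sigma,v)_T \;=\; \bigl(\nabla\sigma_0,\,{\cal Q}_h^{(k-1)}(\bmu v)\bigr)_T - \bigl\langle \sigma_0-\sigma_b,\,{\cal Q}_h^{(k-1)}(\bmu v)\cdot\bn\bigr\rangle_{\partial T},
\]
and then, noting that $\nabla\sigma_0\in [P_{k-1}(T)]^d$, the volume term collapses to $(\bmu\cdot\nabla\sigma_0,v)_T$. This produces precisely the third contribution in $R_T(\sigma,v)$.

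For each pair $(i,j)$ in the diffusion part, I use (\ref{2.4*}) with indices $(i,j)$ swapped (since the operator is $\partial^2_{ji,w}$) and with $\varphi={\cal Q}_h^{(s)}(a_{ij}v)$. This yields
\[
\begin{split}
\bigl(\partial^2_{ji,w}\sigma,\,{\cal Q}_h^{(s)}(a_{ij}v)\bigr)_T = {}&\bigl(\partial^2_{ji}\sigma_0,\,{\cal Q}_h^{(s)}(a_{ij}v)\bigr)_T \\
& + \bigl\langle(\sigma_0-\sigma_b)n_j,\,\partial_i{\cal Q}_h^{(s)}(a_{ij}v)\bigr\rangle_{\partial T} \\
& - \bigl\langle\partial_j\sigma_0-\sigma_{gj},\,n_i\,{\cal Q}_h^{(s)}(a_{ij}v)\bigr\rangle_{\partial T}.
\end{split}
\]
Because $\partial^2_{ji}\sigma_0\in P_{k-2}(T)\subset P_s(T)$ for either admissible choice $s=k-1$ or $s=k-2$, the projection can be dropped in the volume term, giving $(a_{ij}\partial^2_{ji}\sigma_0,v)_T$. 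Multiplying by $\tfrac{1}{2}$ and summing over $i,j$ produces $\tfrac{1}{2}\sum_{i,j}(a_{ij}\partial^2_{ji}\sigma_0,v)_T$ plus the first two boundary sums in $R_T(\sigma,v)$.

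Adding the drift and diffusion pieces recovers $({\cal L}\sigma_0,v)_T + R_T(\sigma,v)$, completing the identity. The argument is purely algebraic; no estimates or inequalities are used. The only subtle points to double-check are bookkeeping of the index swap between $\partial^2_{ij,w}$ in (\ref{2.4*}) and the operator $\partial^2_{ji,w}$ appearing in ${\cal L}_w$, and confirming the polynomial-degree inclusions that legitimize inserting and removing the projections ${\cal Q}_h^{(k-1)}$ and ${\cal Q}_h^{(s)}$; neither presents a real obstacle.
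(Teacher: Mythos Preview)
Your proof is correct and follows essentially the same route as the paper's: insert the $L^2$ projections ${\cal Q}_h^{(k-1)}$ and ${\cal Q}_h^{(s)}$ using the polynomial degree of $\nabla_w\sigma$ and $\partial^2_{ji,w}\sigma$, apply the integration-by-parts identities (\ref{disgradient*}) and (\ref{2.4*}), and then remove the projections from the volume terms using the polynomial degree of $\nabla\sigma_0$ and $\partial^2_{ji}\sigma_0$. The paper compresses these last two steps into one line, while you spell them out, but the argument is the same.
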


\begin{proof}
From the formula (\ref{2.4*}) and (\ref{disgradient*})
for the weak derivatives, we have
\begin{equation}\label{EQ:April:06:200}
\begin{split}
 &({\cal L}_w (\sigma), v)_T \\
=&(\bmu\cdot\nabla_w \sigma, v)_T+
\frac{1}{2}\sum_{i,j=1}^d (a_{ij} \partial_{ji,w}^2\sigma, v)_T\\
=&(\nabla_w\sigma, {\cal Q}^{(k-1)}_h(\bmu v))_T+
\frac{1}{2}\sum_{i,j=1}^d (\partial_{ji,w}^2\sigma, {\cal Q}_h^{(s)}(a_{ij}v))_T\\
=& (\nabla \sigma_0, \bmu v)_T +\frac{1}{2}\sum_{i,j=1}^d
(\partial_{ji}^2 \sigma_0, a_{ij}v)_T +R_T(\sigma,v)\\
= & ({\cal L} \sigma_0, v)_T +R_T(\sigma,v),
\end{split}
\end{equation}
where $R_T(\sigma,v)$ is given by (\ref{EQ:April:06:100}).
\end{proof}

By \emph{error functions} we mean the difference between the numerical solution and the $L^2$ interpolation of the exact solution; namely,
\begin{align}\label{error}
e_h&=u_h-{\cal Q}^{(s)}_hu,\\
\epsilon_h&=\rho_h-Q_h\rho=\rho_h.\label{error-2}
\end{align}

\begin{lemma}\label{errorequa}
Let $u$ and $(u_h;\rho_h) \in W_{h,s}\times V_{h,k}^0$ be the solutions arising from
(\ref{1}) and (\ref{32})-(\ref{2}), respectively.  Then, the error functions $e_h$
and $\epsilon_h$ satisfy the following equations
\begin{eqnarray}\label{sehv}
 s( \epsilon_h , \sigma)+b(\sigma, e_h )&=&\ell_u(\sigma)
,\qquad \forall\sigma\in V_{h,k}^0,\\
b(\epsilon_h, v)&=&0,\qquad\qquad \forall v\in W_{h,s}, \label{sehv2}
\end{eqnarray}
where $\ell_u(\sigma)$ is given by
\begin{equation}\label{lu}
\begin{split}
\qquad \ell_u(\sigma) =&\sum_{T\in {\cal T}_h}\langle
\sigma_b-\sigma_0 ,  (\bmu  u-{\cal Q}^{(k-1)}_h(\bmu {\cal Q}^{(s)}_hu))\cdot
\bn\rangle_{\partial
T}\\
&+\frac{1}{2}\sum_{T\in {\cal T}_h}\sum_{i,j=1}^d\langle
\sigma_0 -\sigma_b, n_j
\partial_i (a_{ij}u-{\cal Q}^{(s)}_h(a_{ij}{\cal Q}^{(s)}_hu))\rangle_{\partial T}\\
&-\frac{1}{2}\sum_{T\in {\cal T}_h}\sum_{i,j=1}^d\langle (
\partial_j\sigma_0 -\sigma_{gj})n_i , a_{ij}  u-\CQ_h^{(s)}(a_{ij} \CQ_h^{(s)}u)\rangle_{\partial T}\\
&+\sum_{T\in {\cal T}_h}({\CL}(\sigma_0), {\CQ}_h^{(s)}u-u)_T.
\end{split}
\end{equation}
\end{lemma}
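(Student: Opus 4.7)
The second identity (\ref{sehv2}) is immediate: by the $H^2$-regularity assumption (\ref{EQ:H2Regularity}) the continuous dual solution is $\rho\equiv 0$, so $Q_h\rho=0$ and $\epsilon_h=\rho_h$, and (\ref{2}) gives $b(\epsilon_h,v)=0$ for every $v\in W_{h,s}$. For (\ref{sehv}), I substitute $u_h=e_h+\CQ_h^{(s)}u$ into (\ref{32}), use $\epsilon_h=\rho_h$, and rearrange:
\begin{equation*}
s(\epsilon_h,\sigma)+b(\sigma,e_h) = -(f,\sigma_0) - b(\sigma,\CQ_h^{(s)}u).
\end{equation*}
The lemma thus reduces to proving $-(f,\sigma_0)-b(\sigma,\CQ_h^{(s)}u)=\ell_u(\sigma)$.

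The main vehicle is Lemma \ref{Lemma:LocalEQ}, applied element-by-element with $v=\CQ_h^{(s)}u\in W_{h,s}$, which rewrites $b(\sigma,\CQ_h^{(s)}u)=\sum_{T\in\T_h}({\cal L}\sigma_0,\CQ_h^{(s)}u)_T+\sum_{T\in\T_h}R_T(\sigma,\CQ_h^{(s)}u)$. I then split $({\cal L}\sigma_0,\CQ_h^{(s)}u)_T=({\cal L}\sigma_0,u)_T+({\cal L}\sigma_0,\CQ_h^{(s)}u-u)_T$; the second summand produces the bulk $({\cal L}\sigma_0,\CQ_h^{(s)}u-u)_T$ term seen in $\ell_u(\sigma)$, while the first is processed against the PDE.

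Since $\sigma_0|_T$ is polynomial I apply classical elementwise integration by parts to $({\cal L}\sigma_0,u)_T$: once for the convective piece $(\bmu\cdot\nabla\sigma_0,u)_T$ and twice for the Hessian piece $\tfrac12\sum_{i,j}(a_{ij}\partial^2_{ji}\sigma_0,u)_T$, pushing all derivatives onto $u$. The interior contribution on each element assembles into $(\sigma_0,\,-\nabla\cdot(\bmu u)+\tfrac12\sum_{ij}\partial^2_{ij}(a_{ij}u))_T=-(\sigma_0,f)_T$ by the strong form of (\ref{1}); after summing over $T$ and cancelling against the free $-(f,\sigma_0)$ term, all that remains are three element-boundary families — one from the convective IBP pairing $\sigma_0$ with $\bmu u\cdot\bn$, and two from the double Hessian IBP pairing $\sigma_0$ with $\partial_i(a_{ij}u)n_j$ and $\partial_j\sigma_0$ with $a_{ij}u\,n_i$.

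To finish, I convert these three boundary families into the projection-error shape that appears in $\ell_u$. The face components $\sigma_b$ and $\bsigma_g$ are single-valued across interior edges and vanish on $\partial\Omega$ (since $\sigma\in V_{h,k}^0$); moreover the flux quantities $\bmu u\cdot\bn$, $\partial_i(a_{ij}u)n_j$ and $a_{ij}u\,n_i$ are single-valued on each interior edge (neighbouring elements contribute opposite outward normals). Consequently the sums $\sum_T\langle\sigma_b,\cdot\rangle_{\partial T}$ and $\sum_T\langle\sigma_{gj},\cdot\rangle_{\partial T}$ against these fluxes vanish and may be inserted at will, so that $\sigma_0$ is replaced by $\sigma_0-\sigma_b$ and $\partial_j\sigma_0$ by $\partial_j\sigma_0-\sigma_{gj}$ in the boundary sums. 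Matching these with the three explicit surface contributions of $R_T(\sigma,\CQ_h^{(s)}u)$ listed in (\ref{EQ:April:06:100}) — which carry the projected fluxes $\CQ_h^{(k-1)}(\bmu\CQ_h^{(s)}u)\cdot\bn$, $\partial_i\CQ_h^{(s)}(a_{ij}\CQ_h^{(s)}u)n_j$ and $\CQ_h^{(s)}(a_{ij}\CQ_h^{(s)}u)n_i$ — each pair collapses into the projection differences $\bmu u-\CQ_h^{(k-1)}(\bmu\CQ_h^{(s)}u)$, $\partial_i\bigl(a_{ij}u-\CQ_h^{(s)}(a_{ij}\CQ_h^{(s)}u)\bigr)$ and $a_{ij}u-\CQ_h^{(s)}(a_{ij}\CQ_h^{(s)}u)$ that make up the first three sums in $\ell_u(\sigma)$. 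The principal obstacle is the careful index-and-sign bookkeeping through the double IBP of the Hessian term and verifying that the remainders left by $R_T$ line up exactly with the boundary terms produced by the $\sigma_b,\bsigma_g$ insertion.
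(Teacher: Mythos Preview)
Your approach is essentially the paper's own: reduce to computing $-(f,\sigma_0)-b(\sigma,\CQ_h^{(s)}u)$, invoke Lemma~\ref{Lemma:LocalEQ}, split off $({\cal L}\sigma_0,\CQ_h^{(s)}u-u)_T$, integrate $({\cal L}\sigma_0,u)_T$ by parts against the strong equation, and then insert $\sigma_b,\bsigma_g$ into the boundary sums so that the IBP fluxes combine with the $R_T$ terms into projection differences.

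There is one slip in your justification of the $\bsigma_g$ insertion. You write that ``$\sigma_b$ and $\bsigma_g$ \ldots\ vanish on $\partial\Omega$ (since $\sigma\in V_{h,k}^0$)'', but the definition of $V_{h,k}^0$ imposes $\sigma_b|_{\partial\Omega}=0$ only; $\bsigma_g$ is unconstrained on the boundary. The reason the sum $\sum_T\langle\sigma_{gj},\tfrac12\sum_i a_{ij}n_i u\rangle_{\partial T}$ vanishes is different for the two contributions: on interior faces it is the single-valuedness of $\sigma_{gj}$ and of $a_{ij}u$ together with the sign flip of $\bn$, as you say; on $\partial\Omega$ it is the homogeneous Dirichlet condition $u=0$ that kills $a_{ij}u\,n_i$. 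This is exactly the content of the paper's identity (\ref{EQ:April:04:106}), justified there by ``$u$ is the exact solution of (\ref{weakform}) and $\sigma_b=0$ on $\partial\Omega$''. Once you replace your incorrect reason with the correct one, the argument goes through unchanged.
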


\begin{proof} First of all, from (\ref{error-2}) and (\ref{2}) we have
\begin{align*}
 b(\epsilon_h, v) = b(\rho_h, v) = 0,\qquad\qquad \forall v\in W_{h,s},
 \end{align*}
which gives (\ref{sehv2}).

Next, notice that $\rho=0$. Thus, by using (\ref{32}) we arrive at
\begin{equation}\label{EQ:April:04:100}
\begin{split}
 & s( \rho_h-Q_h\rho, \sigma)+b(\sigma, u_h-{\cal Q}^{(s)}_hu) \\
 = & s( \rho_h, \sigma)+b(\sigma, u_h)- b(\sigma, {\cal Q}^{(s)}_hu) \\
  = &-(f, \sigma_0)-b(\sigma,{\cal Q}^{(s)}_hu).
\end{split}
\end{equation}
The rest of the proof shall deal with the term
$b(\sigma,{\cal Q}^{(s)}_hu)$. To this end, we use Lemma \ref{Lemma:LocalEQ} to obtain
\begin{equation}\label{EQ:April:04:103}
\begin{split}
 b(\sigma,{\cal Q}^{(s)}_hu) = & \sum_{T\in {\cal T}_h} ({\cal L}_w \sigma, {\cal Q}_h^{(s)}u )_T \\
=& \sum_{T\in {\cal T}_h} ({\cal L} \sigma_0, {\cal Q}_h^{(s)}u )_T +R_T(\sigma, {\cal Q}_h^{(s)}u) \\
= & \sum_{T\in {\cal T}_h} ({\cal L} \sigma_0, u)_T + ({\cal L} \sigma_0, {\cal Q}_h^{(s)}u - u)_T +R_T(\sigma, {\cal Q}_h^{(s)}u).
\end{split}
\end{equation}
From the integration by parts we have
\begin{equation}\label{EQ:April:04:104}
\begin{split}
 \sum_{T\in {\cal T}_h} ({\cal L} \sigma_0, u)_T = &
 \sum_{T\in {\cal T}_h}\left((\bmu\cdot \nabla \sigma_0,  u )_T +\frac{1}{2}\sum_{i,j=1}^d
(a_{ij}\partial_{ji}^2 \sigma_0, u)_T\right) \\
=&\sum_{T\in {\cal T}_h}(\sigma_0, -\nabla\cdot (\bmu
u)+\frac{1}{2}\sum_{i,j=1}^d\partial_{ij}^2(a_{ij} u ))_T \\
& +\sum_{T\in {\cal T}_h}\langle \sigma_0 , \bmu u\cdot \bn -\frac{1}{2}\sum_{i,j=1}^d n_j\partial_i (a_{ij} u) \rangle_{\partial T}\\
& +\sum_{T\in {\cal T}_h}  \sum_{j=1}^d\langle \partial_j\sigma_0, \frac{1}{2}\sum_{i=1}^d a_{ij} n_i u\rangle_{\partial T}.
\end{split}
\end{equation}
As $u$ is the exact solution of (\ref{weakform}) and $\sigma_b=0$ on $\partial\Omega$, we then have
\begin{eqnarray}\label{EQ:April:04:105}
 \sum_{T\in {\cal T}_h}\langle \sigma_b , \bmu u\cdot \bn -\frac{1}{2}\sum_{i,j=1}^d n_j\partial_i (a_{ij} u) \rangle_{\partial T} & = & 0,\\
\sum_{T\in {\cal T}_h}  \sum_{j=1}^d\langle \sigma_{gj},
\frac{1}{2}\sum_{i=1}^d a_{ij} n_i u\rangle_{\partial T} & = &
0.\label{EQ:April:04:106}
\end{eqnarray}
By combining (\ref{EQ:April:04:104}) with (\ref{EQ:April:04:105}) and (\ref{EQ:April:04:106}) we arrive at
\begin{equation}\label{EQ:April:04:107}
\begin{split}
 \sum_{T\in {\cal T}_h} ({\cal L} \sigma_0, u)_T =
& -(\sigma_0, f) +\sum_{T\in {\cal T}_h}\langle \sigma_0-\sigma_b , \bmu u\cdot \bn -\frac{1}{2}\sum_{i,j=1}^d n_j\partial_i (a_{ij} u) \rangle_{\partial T}\\
& +\sum_{T\in {\cal T}_h}  \sum_{j=1}^d\langle \partial_j\sigma_0
- \sigma_{gj}, \frac{1}{2}\sum_{i=1}^d a_{ij} n_i
u\rangle_{\partial T}.
\end{split}
\end{equation}
Finally, substituting (\ref{EQ:April:04:107}) and (\ref{EQ:April:04:103}) into (\ref{EQ:April:04:100}) gives rise to the error equation (\ref{sehv}) after regrouping the reminding terms. This completes the proof of the lemma.
\end{proof}

\section{Error Estimates}\label{Section:Stability}
The goal of this section is to establish some error estimates for the numerical solutions arising from the primal-dual weak Galerkin finite element scheme (\ref{32})-(\ref{2}). The key to our error analysis is the error equations (\ref{sehv})-(\ref{sehv2}) and the \emph{inf-sup} condition derived in Lemma \ref{lem3-new}.

\subsection{Main results} We first show that the semi-norm induced from the stabilizer $s(\cdot,\cdot)$ is indeed a norm in the subspace $V_{h,k}^0$ consisting of weak functions with vanishing boundary value.

\begin{lemma} The semi-norm $\3bar \cdot \3bar_w$ given in (\ref{norm-new}) defines a norm in the linear space $V_{h,k}^0$.
\end{lemma}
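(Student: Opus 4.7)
The bilinear form $s(\cdot,\cdot)$ is symmetric and positive semi-definite by construction, so $\3bar\cdot\3bar_w$ is automatically a seminorm (non-negativity, homogeneity, and the triangle inequality are inherited from the induced inner-product structure). The only nontrivial step is positive definiteness: I need to show that $\3bar\rho\3bar_w=0$ with $\rho\in V_{h,k}^0$ forces $\rho=\{\rho_0,\rho_b,\brho_g\}=0$. My plan is to mimic the uniqueness argument already used in the proof of Theorem \ref{thmunique1}, which essentially proves this statement for free.

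Assume $\3bar\rho\3bar_w=0$. Inspecting each term in \eqref{EQ:local-stabilizer}, I get three consequences on every $T\in\T_h$: (i) $\rho_0=\rho_b$ on $\partial T$, (ii) $\nabla\rho_0=\brho_g$ on $\partial T$, and (iii) $\CL(\rho_0)=0$ in $T$. Since $\rho_b$ is single-valued on each interior edge/face, (i) says that $\rho_0$ is continuous across interelement interfaces, hence $\rho_0\in C^0(\Omega)$. Similarly $\brho_g$ is single-valued on interior edges/faces, so (ii) says that the full gradient of $\rho_0$ (not just the normal component) matches from both sides of every interior interface, which together with (i) gives $\rho_0\in C^1(\Omega)$. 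As $\rho_0$ is piecewise polynomial, this $C^1$-matching is enough to ensure that the piecewise second derivatives are the true distributional second derivatives, so $\rho_0\in H^2(\Omega)$.

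Because $\rho\in V_{h,k}^0$, one has $\rho_b=0$ on $\partial\Omega$, and combining this with (i) yields $\rho_0=0$ on $\partial\Omega$, i.e., $\rho_0\in H^2(\Omega)\cap H_0^1(\Omega)$. Condition (iii) then gives $\CL\rho_0=0$ in $\Omega$, and the $H^2$-regularity hypothesis \eqref{EQ:H2Regularity} guarantees uniqueness for the problem $\CL\Phi=\chi$ with $\Phi|_{\partial\Omega}=0$; taking $\chi=0$ forces $\rho_0\equiv 0$. Going back to (i)–(ii) then yields $\rho_b=\rho_0|_{\partial T}=0$ and $\brho_g=\nabla\rho_0|_{\partial T}=0$ on every $\partial T$, completing the proof that $\rho=0$.

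The only delicate point I foresee is the justification that the $C^1$-matching across element boundaries, combined with the piecewise polynomial structure, really gives $\rho_0\in H^2(\Omega)$ rather than merely a piecewise-$H^2$ function; but this is a standard fact (no singular distributional contributions appear at the interelement skeleton once normal derivatives match). Everything else is a direct transcription of the uniqueness argument in Theorem \ref{thmunique1}, so the lemma follows immediately once the three conditions (i)–(iii) are read off from the vanishing of $\3bar\rho\3bar_w$.
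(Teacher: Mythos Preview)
Your proposal is correct and follows essentially the same approach as the paper: both reduce to positivity, read off the three conditions $\rho_0=\rho_b$, $\nabla\rho_0=\brho_g$, and $\CL(\rho_0)=0$ from $s(\rho,\rho)=0$, deduce $\rho_0\in C_0^1(\Omega)$ satisfies $\CL\rho_0=0$, and invoke the uniqueness implied by \eqref{EQ:H2Regularity}. Your write-up is in fact more explicit than the paper's (spelling out the $H^2$ membership and the recovery of $\rho_b,\brho_g$), but the argument is identical in substance.
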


\begin{proof}
It suffices to verify the positivity property for $\3bar\cdot\3bar_w$. Let $\rho\in V_{h,k}^0$ be such that $\3bar \rho \3bar_w=0$. It follows that $s(\rho,\rho)=0$, and hence
$\rho_0=\rho_b$ and $\nabla \rho_0 =\brho_g$ on each
$\partial T$. Furthermore, we have ${\cal L}(\rho_0)=0$ on each element
$T\in {\cal T}_h$.  Thus, $\rho_0\in C^1_0(\Omega)$ and satisfies
$$
 {\cal L} \rho_0  =0,\qquad \mbox{in} \ \Omega.
$$
It follows that $\rho_0 \equiv 0$, and hence $\rho\equiv 0$. This completes the proof.
\end{proof}

 Our main error estimate can be stated as follows.
\begin{theorem} \label{theoestimate}
Let $u$ and $(u_h;\rho_h) \in W_{h,s}\times V_{h,k}^0$ be the solutions of (\ref{weakform}) and (\ref{32})-(\ref{2}), respectively. Let $m\in [2, k+1]$ if $s=k-1$ and $m\in [2,k]$ if $s=k-2$.
Assume that the coefficient tensor $a(x)$ and the drift vector $\bmu$ are uniformly piecewise smooth up to order $m-1$ in $\Omega$ with respect to the finite element partition ${\cal T}_h$.
Additionally, assume that the exact solution $u$ is sufficiently regular such that $u\in H^{m-1}(\Omega)\cap H^2(\Omega)$. Then, we have
 \begin{equation}\label{erres}
\3bar \rho_h \3bar_w+\|u_h - {\cal Q}^{(s)}_h u\| \lesssim h^{m-1}(\|u\|_{m-1}+h\delta_{m,2}\|u\|_2),
\end{equation}
provided that the meshsize $h<h_0$ holds true for a sufficiently small, but fixed $h_0>0$. Moreover, one has the following optimal order error estimate in $L^2$:
\begin{equation}\label{erres:L2}
\|u - u_h\| \lesssim h^{m-1}(\|u\|_{m-1}+h\delta_{m,2}\|u\|_2).
\end{equation}
\end{theorem}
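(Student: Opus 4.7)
The plan is to bootstrap the two error estimates from the error equations of Lemma \ref{errorequa} together with the \emph{inf-sup} condition of Lemma \ref{lem3-new} and the $L^2$-projection bounds to be developed in Section \ref{Section:Estimates4L2Projections}. The argument proceeds in four stages.

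First I would control $\3bar \epsilon_h \3bar_w$. Taking $\sigma = \epsilon_h$ in \eqref{sehv} and $v = e_h$ in \eqref{sehv2}, and subtracting, we obtain
\begin{equation*}
\3bar \epsilon_h \3bar_w^2 \;=\; s(\epsilon_h,\epsilon_h) \;=\; \ell_u(\epsilon_h).
\end{equation*}
Hence everything reduces to a continuity estimate of the form $|\ell_u(\sigma)| \lesssim h^{m-1}(\|u\|_{m-1} + h\delta_{m,2}\|u\|_2)\,\3bar \sigma \3bar_w$ for every $\sigma\in V_{h,k}^0$. Each of the four terms in the representation \eqref{lu} of $\ell_u(\sigma)$ is a boundary or volume pairing in which one factor matches exactly one of the three summands in the stabilizer \eqref{EQ:local-stabilizer}: the first and second terms pair against $\sigma_0-\sigma_b$ (controlled by $h_T^{-3/2}\|\sigma_0-\sigma_b\|_{\partial T}$), the third pairs against $\partial_j\sigma_0-\sigma_{gj}$ (controlled by $h_T^{-1/2}\|\nabla\sigma_0-\bsigma_g\|_{\partial T}$), and the last pairs against $\CL(\sigma_0)$ (controlled by $\delta^{1/2}\|\CL(\sigma_0)\|_T$). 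By Cauchy--Schwarz and the trace inequality, the remaining factors reduce to commutator-type quantities $\bmu u - \CQ_h^{(k-1)}(\bmu \CQ_h^{(s)}u)$ and $a_{ij}u - \CQ_h^{(s)}(a_{ij}\CQ_h^{(s)}u)$ (and its tangential/normal derivatives), which by the projection estimates of Section \ref{Section:Estimates4L2Projections} and the piecewise smoothness of $\bmu,a_{ij}$ satisfy the desired $O(h^{m-1})$ bound. The $h\delta_{m,2}\|u\|_2$ term appears only in the lowest-regularity case $m=2$, where the commutator has to pick up one extra power of $h$ together with an $\|u\|_2$ factor in the second-derivative term.

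Second, I would control $\|e_h\|$ using the \emph{inf-sup} condition. Given $v := e_h \in W_{h,s}$, Lemma \ref{lem3-new} provides $\sigma^* \in V_{h,k}^0$ satisfying $b(e_h,\sigma^*) \ge \tfrac12\|e_h\|^2$ and $\3bar \sigma^* \3bar_w \le \beta\|e_h\|$. Plugging $\sigma=\sigma^*$ into \eqref{sehv} gives
\begin{equation*}
\tfrac12\|e_h\|^2 \;\le\; b(\sigma^*,e_h) \;=\; \ell_u(\sigma^*) - s(\epsilon_h,\sigma^*)
\;\le\; \bigl(Ch^{m-1}(\|u\|_{m-1}+h\delta_{m,2}\|u\|_2) + \3bar\epsilon_h\3bar_w\bigr)\3bar\sigma^*\3bar_w.
\end{equation*}
Combining with the first stage bound on $\3bar\epsilon_h\3bar_w$ and cancelling one factor of $\|e_h\|$ yields \eqref{erres}.

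Third, the $L^2$ estimate \eqref{erres:L2} follows from the triangle inequality $\|u-u_h\| \le \|u-\CQ_h^{(s)}u\| + \|e_h\|$, since the projection error $\|u-\CQ_h^{(s)}u\|$ is $O(h^{m-1}\|u\|_{m-1})$ by standard approximation theory for piecewise polynomials of degree $s \in \{k-1,k-2\}$ (with the compatibility assumption $m-1 \le s+1$ built into the statement). The main obstacle is clearly the second stage: verifying the continuity estimate for $\ell_u(\sigma)$ with the sharp constants and the correct dependence on $m$, and in particular making sure the boundary commutators are decomposed in a way that leaves one factor genuinely controlled by $\3bar\sigma\3bar_w$ rather than a stronger norm. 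A careful use of the $L^2$-projection commutation properties for $\CQ_h^{(s)}$ and $\CQ_h^{(k-1)}$, together with the identity $\langle \sigma_0-\sigma_b,\,c\rangle_{\partial T}=0$ whenever $c$ lies in the image of $\CQ_b$, will be the workhorse that extracts the optimal power of $h$.
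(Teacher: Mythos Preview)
Your outline matches the paper's proof almost step for step: test \eqref{sehv} with $\sigma=\epsilon_h$ and use \eqref{sehv2} to obtain $\3bar\epsilon_h\3bar_w^2=\ell_u(\epsilon_h)$; establish the continuity bound $|\ell_u(\sigma)|\lesssim h^{m-1}(\|u\|_{m-1}+h\delta_{m,2}\|u\|_2)\,\3bar\sigma\3bar_w$ via Cauchy--Schwarz with the $h_T$-weighted splitting you describe and the projection estimates \eqref{term1}--\eqref{term3}; then feed the inf-sup test function of Lemma~\ref{lem3-new} back into \eqref{sehv} to control $\|e_h\|$; and finish \eqref{erres:L2} by the triangle inequality together with $\|u-\CQ_h^{(s)}u\|\lesssim h^{m-1}\|u\|_{m-1}$.

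One small correction: the identity $\langle \sigma_0-\sigma_b,\,c\rangle_{\partial T}=0$ for $c$ in the range of $Q_b$ that you invoke at the end is not true (both $\sigma_0|_{\partial T}$ and $\sigma_b$ live in $P_k(e)$, so their difference has no orthogonality against $P_k(e)$), and the paper does not use anything of the sort. The optimal power of $h$ comes solely from the weighted Cauchy--Schwarz pairing of the stabilizer factors against the commutator quantities, with the latter estimated directly by \eqref{term1}--\eqref{term3}; no additional cancellation is needed.
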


\begin{proof} By letting $\sigma = \epsilon_h$ in the error equation (\ref{sehv}) and using
(\ref{sehv2}) we arrive at
\begin{equation}\label{EQ:April7:001}
s(\epsilon_h, \epsilon_h) = \ell_u(\epsilon_h).
\end{equation}
To deal with the term on the right-hand side of (\ref{EQ:April7:001}), we use the Cauchy-Schwarz inequality, the representation (\ref{lu}), and the estimates (\ref{error3})-(\ref{term3})
to obtain
\begin{equation*}\label{aij}
\begin{split}
&\ |\ell_u(\sigma)| \\
\leq & \Big(\sum_{T\in {\cal
T}_h}h_T^{-3}\|\sigma_0-\sigma_b\|^2_{\partial
T}\Big)^{\frac{1}{2}} \Big(\sum_{T\in {\cal T}_h}h_T^{ 3}\| (\bmu
u-{\cal Q}^ {(k-1)}_h(\bmu {\cal Q}^ {(s)}_hu))\cdot
\bn\|^2_{\partial T}\Big)^{\frac{1}{2}}\\
 + & \Big(\sum_{T\in {\cal T}_h}\sum_{j=1}^dh_T^{-3}\|
\sigma_0-\sigma_b\|^2_{\partial T}\Big)^{\frac{1}{2}}
\Big(\sum_{T\in {\cal T}_h}\sum_{i,j=1}^dh_T^{ 3}\|\partial_i
(a_{ij}u-{\cal Q}_h^{(s)}(a_{ij}{\cal Q}^ {(s)}_hu))
\|^2_{\partial T}\Big)^{\frac{1}{2}}\\
 +&  \Big(\sum_{T\in {\cal
T}_h}\sum_{j=1}^dh_T^{-1}\|\partial_j\sigma_0-\sigma_{gj}\|^2_{\partial
T}\Big)^{\frac{1}{2}} \Big(\sum_{T\in {\cal T}_h}\sum_{i,j=1}^dh_T
\| a_{ij}u-{\cal Q}_h^{(s)}(a_{ij}{\cal Q}^ {(s)}_hu)\|^2_{\partial
T}\Big)^{\frac{1}{2}}\\
 +& \Big(\sum_{T\in {\cal
T}_h}\|{\cal L}(\sigma_0) \|^2_{T}\Big)^{\frac{1}{2}} \Big(\sum_{T\in {\cal T}_h}
\| {\cal Q}_h^{(s)} u-u\|^2_{
T}\Big)^{\frac{1}{2}}\\
\lesssim & h^{k}\|u\|_{k-1} \3bar\sigma \3bar_w  + h^{k-1}(\|u\|_{k-1}+h \delta_{k,2}\|u\|_2) \3bar\sigma \3bar_w \\
&+ h^{k-1}\|u\|_{k-1} \3bar\sigma \3bar_w  + h^{k-1}\|u\|_{k-1} \3bar\sigma \3bar_w  \\
\lesssim &  h^{k-1}(\|u\|_{k-1}+h\delta_{k,2}\|u\|_2) \3bar\sigma \3bar_w,
\end{split}
\end{equation*}
for any $\sigma\in V_{h,k}^0$. Now substituting the above estimate into
(\ref{EQ:April7:001}) yields the following error estimate:
\begin{equation*}
\3bar \epsilon_h \3bar_w^2 \lesssim h^{k-1}(\|u\|_{k-1}+h\delta_{k,2}\|u\|_2) \3bar\epsilon_h \3bar_w,
\end{equation*}
which leads to
\begin{equation}\label{EQ:April7:002}
\3bar \epsilon_h \3bar_w \lesssim h^{k-1}(\|u\|_{k-1}+h\delta_{k,2}\|u\|_2).
\end{equation}

Next, for the error function $e_h=u_h - {\cal Q}^{(k-2)}_h u$, from the \emph{inf-sup}
condition (\ref{EQ:inf-sup-condition-01})-(\ref{EQ:inf-sup-condition-02}) there exists a $\sigma\in V_{h,k}^0$ such that
\begin{equation}\label{EQ:April7:005}
\frac12 \|e_h\|^2 \leq | b(e_h, \sigma) |,\quad \3bar \sigma\3bar_w \leq \beta \|e_h\|.
\end{equation}
On the other hand, the error equation (\ref{sehv}) implies
$$
b(e_h, \sigma) = \ell_u(\sigma) - s(\epsilon_h, \sigma).
$$
It follows that
\begin{equation}\label{EQ:April7:006}
\begin{split}
|b(e_h, \sigma)| & \leq |\ell_u(\sigma)| + \3bar \epsilon_h\3bar_w  \3bar \sigma\3bar_w\\
& \lesssim h^{k-1}(\|u\|_{k-1}+h\delta_{k,2}\|u\|_2) \3bar\sigma \3bar_w.
\end{split}
\end{equation}
Combing (\ref{EQ:April7:005}) with (\ref{EQ:April7:006}) gives rise to the following error estimate
$$
\frac12 \|e_h\|^2 \lesssim h^{k-1}(\|u\|_{k-1}+h\delta_{k,2}\|u\|_2) \|e_h\|,
$$
which is
\begin{equation}\label{EQ:April7:008}
\|e_h\| \lesssim h^{k-1}(\|u\|_{k-1}+h\delta_{k,2}\|u\|_2).
\end{equation}
The desired error estimate (\ref{erres}) is a direct result of (\ref{EQ:April7:002}) and
(\ref{EQ:April7:008}). This completes the proof of the theorem.
\end{proof}

In Table \ref{Table:rate-of-convergence}, we provide a summary for the rate of convergence for the numerical approximation $u_h$ arising from the WG scheme (\ref{32})-(\ref{2}). The first line of the table indicates the type of elements used in the numerical scheme. Recall that the space $W_{h,s}$ was employed for approximating $u_h$, while $V_{h,k}$ was for the auxiliary variable $\rho_h$. Although the solution $u_h$ is the quantity of major interest in this application, we believe that the auxiliary variable $\rho_h$ might provide some useful information for the design of error estimators for $u_h$. The second row of the table shows an optimal order of convergence for $u_h$ in the usual $L^2$-norm; i.e., a convergence of order $k$ when piecewise polynomials of degree $k-1$ are used.
\begin{table}[H]
\begin{center}
\caption{Convergence for the primal-dual weak Galerkin finite element method}\label{Table:rate-of-convergence}
\begin{tabular}{|c|c|c|}
\hline
        & $ V_{h,k}\times W_{h,k-2}, \ k\ge 2$ & $V_{h,k}\times W_{h, k-1},\ k\ge 1$  \\
\hline
$\|u-u_h\|$   & $h^{k-1} $ & $h^k$ \\
\hline
\end{tabular}
\end{center}
\end{table}

\subsection{Extension to $C^0$-type elements} By $C^0$-type elements, we mean a special class of finite element spaces $V_{h,k}$ consisting of weak finite element functions $v=\{v_0, v_b, \bv_g\}$ where $v_b = v_0|_\pT$ on each element $T\in \T_h$. Analogously, $C^{-1}$-type elements refer to the general case of $v=\{v_0, v_b, \bv_g\} \in V_{h,k}$ for which $v_b$ is totally independent of $v_0$. It is clear that $C^0$-type finite element schemes involve less number of
degrees of freedom than $C^{-1}$-type, as the boundary component $v_b$ can be obviously eliminated from the list of unknowns. However, $C^0$-type elements would impose more limitations on the geometry of the finite element partition $\T_h$.

The error estimates shown as in Theorem \ref{theoestimate} can be extended to $C^0$-type triangular elements for $V_{h,k}$. The rest of this section shall explain some modifications necessary for such an extension. First of all, for $C^0$-type elements, the discrete weak second order partial derivative $\partial^2_{ij,w} v$ can be computed as a polynomial in $P_s(T)$ on each element $T$ by the following equation
\begin{equation}\label{EQ:weak-partial-ij}
\begin{split}
(\partial^2_{ij, w}v,\varphi)_T=-(\partial_i
v_0,\partial_{j}\varphi)_T+ \langle v_{gi} ,\varphi
n_j\rangle_{\partial T},\qquad \forall \varphi\in P_s(T).
\end{split}
\end{equation}
For the convergence analysis to work, we need to have the error equations (\ref{sehv})-(\ref{sehv2}) which in turn require the commutative property (\ref{l})-(\ref{l-2}) for a properly defined projection operator $Q_h$ given as in (\ref{EQ:OperatorQh}). As there is no change on the variable $\bv_g$ and the space it lives in, the operator $\textbf{Q}_g$ should remain unchanged as the usual $L^2$ projection into the space of polynomials of degree $k-1$ on each piece of $\pT$. However, the operator $Q_0$ must be modified by using the interpolation operator $\tilde I_k$ given as in Lemma A.3 of \cite{gr}. For 2D triangular elements, this interplant polynomial $\tilde I_k v \in P_k(T)$ satisfies
\begin{eqnarray}
\int_e (v - \tilde I_k v) \phi ds &=& 0 \quad \forall \ \phi \in
P_{k-2}(e),\quad \forall \mbox{ side $e$ of $T$} \label{EQ:edge-moment}\\
\int_T (v - \tilde I_k v) \phi ds &=& 0 \quad \forall \ \phi \in
P_{k-3}(T).\label{EQ:element-moment}
\end{eqnarray}
From the integration by parts, (\ref{EQ:edge-moment}), and
(\ref{EQ:element-moment}), we then obtain
\begin{equation}\label{EQ:weak-derivative-100}
\begin{split}
(\partial_i \tilde I_k v,\partial_{j}\varphi)_T & = -( \tilde I_k v,
\partial_{ji}^2\varphi)_T + \langle \tilde I_k v,
\partial_{j}\varphi n_i\rangle_\pT \\
& = -( v,
\partial_{ji}^2\varphi)_T + \langle v,
\partial_{j}\varphi n_i\rangle_\pT \\
& = (\partial_i v,\partial_{j}\varphi)_T
\end{split}
\end{equation}
for all $\varphi\in P_{k-1}(T)$. Thus, from
(\ref{EQ:weak-partial-ij}), (\ref{EQ:weak-derivative-100}), and the fact that $s\le k-1$ we arrive at
\begin{equation*}
\begin{split}
(\partial^2_{ij, w} Q_hv,\varphi)_T &=-(\partial_i \tilde I_k
v,\partial_{j}\varphi)_T+ \langle (\textbf{Q}_g \nabla v)_i ,\varphi
n_j\rangle_{\partial T}\\
&= -(\partial_i v,\partial_{j}\varphi)_T+ \langle (\nabla v)_i,\varphi n_j\rangle_{\partial T}\\
& = (\partial^2_{ij} v,\varphi)_T \\
& = ({\cal Q}_h \partial^2_{ij} v,\varphi)_T
\end{split}
\end{equation*}
for all $\varphi\in P_{s}(T)$, which implies the commutative
property (\ref{l}). Note that (\ref{l-2}) clearly holds true as the weak gradient is identical with the strong gradient for $C^0$-type elements. Readers are encouraged to check out
\cite{mwyz-biharmonic} for a detailed discussion on the use of
$C^0$-type elements in the context of weak Galerkin finite element methods.

\section{Error Estimates for $L^2$ Projections}\label{Section:Estimates4L2Projections}
Recall that ${\cal T}_h$ is a shape-regular finite element
partition of the domain $\Omega$. For any $T\in {\cal T}_h$ and
$\phi\in H^1(T)$, the following trace inequality holds true
\cite{wy3655}:
\begin{equation}\label{tracein}
 \|\phi\|^2_{\partial T} \lesssim h_T^{-1}\|\phi\|_T^2+h_T \|\nabla \phi\|_T^2.
\end{equation}
If $\phi$ is a polynomial on the element $T\in {\cal T}_h$, then
from the inverse inequality, we have \cite{wy3655},
\begin{equation}
 \|\phi\|^2_{\partial T} \lesssim h_T^{-1}\|\phi\|_T^2.
\end{equation}

\begin{lemma}\cite{wy3655}
 Let ${\cal T}_h$ be a finite element partition of $\Omega$ satisfying the shape regular assumption given in \cite{wy3655}. For $0\leq t \leq \min(2,k)$, the following estimates hold true:
\begin{eqnarray}\label{error1}
& \sum_{T\in {\cal T}_h}h_T^{2t}\|u-Q_0u\|^2_{t,T} \lesssim
h^{2(m+1)}\|u\|^2_{m+1},&\qquad m\in [t-1,k],\ k\ge 1,\\
\label{error2}
& \sum_{T\in {\cal T}_h}h_T^{2t}\|u-{\cal Q}^{(k-1)}_hu\|^2_{t,T} \lesssim h^{2m}\|u\|^2_{m},&\qquad m\in [t,k],\ k\ge 1, \\
\label{error3} & \sum_{T\in {\cal T}_h}h_T^{2t}\|u-{\cal
Q}^{(k-2)}_hu\|^2_{t,T}  \lesssim h^{2m}\|u\|^2_{m},&\qquad  m\in
[t,k-1],\ k\ge 2.
\end{eqnarray}
Note that (\ref{error3}) is merely a different form of
(\ref{error2}).
 \end{lemma}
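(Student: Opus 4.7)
The plan is to prove the three estimates via standard local approximation theory combined with a scaling argument adapted to the shape-regular polytopal partition $\mathcal{T}_h$. Since (error3) is explicitly noted to be a reformulation of (error2), and (error2) differs from (error1) only by the choice of polynomial degree of the target space, I would focus on establishing a single general estimate of the form
$$
\|u - \mathcal{Q}_h^{(r)} u\|_{t,T} \;\lesssim\; h_T^{m-t}\, \|u\|_{m,T}, \qquad t \leq m \leq r+1,
$$
and then specialize to $r=k, k-1, k-2$ to recover the three claims (with the corresponding shift $m \mapsto m+1$ for (error1)).

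The first step is to work on a single element $T \in \mathcal{T}_h$. By the shape-regularity assumption of \cite{wy3655}, each polytopal $T$ admits a subdivision into a uniformly bounded number of simplices, each of which is shape-regular and whose diameter is comparable to $h_T$. On each such simplex, I would map to a reference simplex via an affine transformation, invoke the classical Bramble--Hilbert--Dupont lemma to bound the $L^2$ projection error of degree-$r$ polynomials in the Sobolev seminorm $|\cdot|_{t,\hat{T}}$, and then scale back. The scaling produces exactly the factor $h_T^{m-t}$, which when combined with $h_T^{2t}$ inside the sum yields $h_T^{2m}$ per element.

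The only subtle point is that $\mathcal{Q}_h^{(r)}$ is the global $L^2$ projection onto piecewise polynomials, which is element-local, so the local projection on $T$ is precisely the $L^2$ projection of $u|_T$ onto $P_r(T)$. This means I can estimate $\|u - \mathcal{Q}_h^{(r)}u\|_{t,T}$ by the best approximation error of $u|_T$ in $P_r(T)$ with respect to a suitable norm, using a local inverse inequality on the polynomial part to pass from the $L^2$ seminorm estimate to the $H^t$ seminorm estimate. Summing the local bounds over all $T \in \mathcal{T}_h$ and invoking $h_T \leq h$ converts the $h_T^{2m}$ factors into the global bound $h^{2m}\|u\|_m^2$ (or $h^{2(m+1)}\|u\|_{m+1}^2$ for (error1)).

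The main obstacle is the polytopal nature of the partition: there is no single canonical reference element, so the standard affine-equivalence argument does not apply directly to $T$ as a whole. I would handle this exactly as in \cite{wy3655}, namely by exploiting the shape-regularity condition which guarantees a controlled sub-triangulation, and by using the $L^2$-orthogonality of $\mathcal{Q}_h^{(r)}$ together with a local inverse estimate to transfer the element-by-element $L^2$ approximation error into higher-order seminorm bounds. Once that technical bridge is in place, the constraint $0 \leq t \leq \min(2,k)$ and the admissible ranges of $m$ in (error1)--(error3) emerge naturally from the polynomial degrees and the orders of Sobolev seminorms being controlled by the inverse inequality.
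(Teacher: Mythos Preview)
The paper does not provide a proof of this lemma; it is stated with a citation to \cite{wy3655} and used as a quoted result. Your outline is the standard Bramble--Hilbert plus scaling argument adapted to shape-regular polytopal meshes, and it is correct in substance; there is nothing to compare against in the paper itself beyond the reference.
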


\begin{lemma} Assume that the coefficient tensor $a(x)$ and the drift vector $\bmu$ are uniformly piecewise smooth up to order $m-1$ in $\Omega$ with respect to the finite element partition ${\cal T}_h$. Then for any $v\in H^{m-1}(\Omega)\cap H^2(\Omega)$, the following estimates hold true:
\begin{eqnarray}\label{term1}
 & \Big(\sum_{T\in {\cal T}_h}h_T^{3}\| (\bmu
v-{\cal Q}^ {(k-1)}_h(\bmu {\cal Q}^ {(s)}_h v))\cdot
\bn\|^2_{\partial T}\Big)^{\frac{1}{2}} \lesssim h^{m}\|v\|_{m-1};\\
\label{term2} & \Big(\sum_{T\in {\cal T}_h}\sum_{i,j=1}^dh_T^{
3}\|\partial_i (a_{ij}v-{\cal Q}_h^{(s)}(a_{ij}{\cal Q}^ {(s)}_h
v))
\|^2_{\partial T}\Big)^{\frac{1}{2}} \\
& \quad \lesssim  h^{m-1}(\|v\|_{m-1}+h\delta_{m,2}\|v\|_2);\nonumber\\
\label{term3} & \Big(\sum_{T\in {\cal T}_h}\sum_{i,j=1}^dh_T \|
a_{ij}u-{\cal Q}_h^{(s)}(a_{ij}{\cal Q}^ {(s)}_hu)\|^2_{\partial
T}\Big)^{\frac{1}{2}} \lesssim h^{m-1} \|u\|_{m-1}.
\end{eqnarray}
Here, $m$ is an integer satisfying $m \in [2,k+1]$ if $s=k-1$ and
$m \in [2, k]$ if $s=k-2$, and $\delta_{m,2}$ is the usual
Kronecker's delta with value $1$ when $m=2$ and value $0$
otherwise.
\end{lemma}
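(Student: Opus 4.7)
My plan is to prove all three estimates by the same template: apply the trace inequality (\ref{tracein}) to move boundary norms to interior norms on each element, split the projection error through a triangle inequality of the form
\begin{equation*}
a v - \CQ^{(r)}_h(a\, \CQ^{(s)}_h v) = \big(a v - \CQ^{(r)}_h(a v)\big) + \CQ^{(r)}_h\big(a(v - \CQ^{(s)}_h v)\big),
\end{equation*}
then invoke the standard $L^2$ projection bounds (\ref{error1})-(\ref{error3}) on each piece, using the piecewise smoothness of $a$ and $\bmu$ to treat the products and, wherever a derivative lands on a polynomial projection, the inverse inequality.

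For (\ref{term1}), after one use of the trace inequality I need to bound $\|\bmu v - \CQ^{(k-1)}_h(\bmu\, \CQ^{(s)}_h v)\|_T$ and $\|\nabla(\bmu v - \CQ^{(k-1)}_h(\bmu\, \CQ^{(s)}_h v))\|_T$. The first summand of the split is the canonical projection error for the smooth product $\bmu v \in H^{m-1}$, handled directly by (\ref{error2})/(\ref{error3}). The second summand is a polynomial-valued projection of $\bmu(v - \CQ^{(s)}_h v)$, whose $L^2$ norm is controlled by that of $v - \CQ^{(s)}_h v$ and estimated again by (\ref{error2})/(\ref{error3}); any derivative falling on this polynomial is absorbed by the inverse inequality at the cost of one power of $h_T^{-1}$. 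Reassembling with the factor $h_T^{3/2}$ from the trace inequality yields the announced $h^m$.

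For (\ref{term2}) the derivative $\partial_i$ sitting \emph{outside} the projection is the main obstacle. I apply the trace inequality to the polynomial $\partial_i(\cdots)$ directly, so only $h_T^{-1}\|\partial_i(\cdots)\|_T^2$ appears. Using the same split, the piece $\partial_i(a_{ij}v - \CQ^{(s)}_h(a_{ij}v))$ is bounded through (\ref{error2})/(\ref{error3}) at $t=1$, which is exactly where the hypothesis $m\geq 2$ becomes essential. The piece $\partial_i \CQ^{(s)}_h(a_{ij}(v - \CQ^{(s)}_h v))$ is handled by the inverse inequality, giving $h_T^{-1}\|a_{ij}(v - \CQ^{(s)}_h v)\|_T$, and then by (\ref{error2})/(\ref{error3}). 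The $\delta_{m,2}$ correction appears in precisely the borderline regime $m=2$: there the only usable norm on the right side of (\ref{error3}) is $\|v\|_2$, costing one extra power of $h$ relative to the generic rate $h^{m-1}\|v\|_{m-1}$ available for $m\geq 3$. For (\ref{term3}) the same splitting suffices; since no derivative acts on the projected quantity, no inverse inequality is needed and the weight $h_T^{1/2}$ simply produces the stated $h^{m-1}\|v\|_{m-1}$, one power worse than (\ref{term1}) as expected.

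The principal bookkeeping difficulty will be the simultaneous validity of the indices in (\ref{error2}) and (\ref{error3}) in the two regimes $s=k-1$ and $s=k-2$: every invocation of these projection estimates must lie inside its stated range, which constrains the admissible $m$ to $[2,k+1]$ or $[2,k]$ respectively, and the $\delta_{m,2}$ correction must be inserted only where the threshold case $m=2$ forces use of the $H^2$ seminorm. Beyond this indexing care, the arguments are a routine combination of trace, inverse, and projection inequalities.
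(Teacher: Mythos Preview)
Your template---trace inequality, the split $av-\CQ_h^{(r)}(a\CQ_h^{(s)}v)=(av-\CQ_h^{(r)}(av))+\CQ_h^{(r)}(a(v-\CQ_h^{(s)}v))$, then projection estimates---is exactly the paper's approach, and your handling of (\ref{term1}) and (\ref{term3}) matches it.

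There is one slip in (\ref{term2}). You write that $\partial_i(a_{ij}v-\CQ_h^{(s)}(a_{ij}\CQ_h^{(s)}v))$ is a polynomial and invoke the polynomial trace inequality so that only the $h_T^{-1}\|\cdot\|_T^2$ term survives. But $a_{ij}v$ is not a polynomial, so you must use the full trace inequality (\ref{tracein}), which the paper does; this produces an additional term $h_T^{4}\|\partial_i(\cdots)\|_{1,T}^2$ involving the \emph{second} derivatives of $a_{ij}v-\CQ_h^{(s)}(a_{ij}v)$. It is precisely this term, estimated through (\ref{error2})/(\ref{error3}) at $t=2$, that generates the $\delta_{m,2}$ correction: the constraint $m'\ge t=2$ in those estimates prevents you from landing on $\|v\|_{m-1}=\|v\|_1$ when $m=2$, forcing $\|v\|_2$ at the cost of one extra power of $h$. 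Your stated source for $\delta_{m,2}$ (the second summand of the split at $t=0$) is not where it actually arises. With the full trace inequality restored, your argument is the paper's.
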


\begin{proof}
To prove (\ref{term1}), from the trace inequality (\ref{tracein})
and the estimate (\ref{error3}) we have
 \begin{equation*}
  \begin{split}
   & \sum_{T\in {\cal T}_h}h_T^{ 3}\| (\bmu
v-{\cal Q}^ {(k-1)}_h(\bmu {\cal Q}^ {(s)}_h v))\cdot
\bn\|^2_{\partial T}\\
\lesssim & \sum_{T\in {\cal T}_h}h_T^{ 2}\| \bmu v-{\cal Q}^
{(k-1)}_h(\bmu {\cal Q}^ {(s)}_h v)\|^2_{ T}+h_T^{4}\| \bmu
v-{\cal Q}^ {(k-1)}_h(\bmu {\cal Q}^ {(s)}_h v)\|^2_{ 1,T}\\
\lesssim & \sum_{T\in {\cal T}_h} h_T^{ 2}\|  \bmu v-{\cal Q}^
{(k-1)}_h(\bmu  v) \|^2_{ T}+ h_T^{ 2}\| {\cal Q}^ {(k-1)}_h(\bmu
v)-{\cal Q}^ {(k-1)}_h(\bmu {\cal Q}^ {(s)}_h v) \|^2_{ T}
\\&\qquad+h_T^{4}\|  \bmu
v-{\cal Q}^ {(k-1)}_h(\bmu v) \|^2_{ 1,T}+h_T^{4}\| {\cal Q}^
{(k-1)}_h(\bmu v)
-{\cal Q}^ {(k-1)}_h(\bmu {\cal Q}^ {(s)}_h v) \|^2_{ 1,T}\\
 \lesssim & \sum_{T\in {\cal T}_h}
h_T^{ 2}\|  \bmu v-{\cal Q}^ {(k-1)}_h(\bmu v) \|^2_{ T}+ h_T^{
2}\|  \bmu  v - \bmu {\cal Q}^ {(s)}_h v  \|^2_{ T}
\\&\qquad+h_T^{4}\|  \bmu v-{\cal Q}^ {(k-1)}_h(\bmu v) \|^2_{ 1,T}+h_T^{4}\| \bmu v
- \bmu {\cal Q}^ {(s)}_h v  \|^2_{ 1,T}\\
\lesssim & \ h^{2m}\|v\|^2_{m-1}
  \end{split}
 \end{equation*}
 for $m\in [2, k+1]$ when $s=k-1$ and $m\in [2, k]$ when $s=k-2$.

As to (\ref{term2}), we use the trace inequality (\ref{tracein})
and the estimate (\ref{error3}) to obtain
 \begin{equation*}
  \begin{split}
&\sum_{T\in {\cal T}_h}\sum_{i,j=1}^dh_T^{ 3}\|\partial_i
(a_{ij}v-{\cal Q}_h^{(s)}(a_{ij}{\cal Q}^ {(s)}_h v))
\|^2_{\partial T}\\
 \lesssim & \sum_{T\in {\cal T}_h}\sum_{i,j=1}^dh_T^{ 2}\|\partial_i
(a_{ij}v-{\cal Q}_h^{(s)}(a_{ij}{\cal Q}^ {(s)}_h v)) \|^2_{
T}+h_T^{ 4}\|\partial_i (a_{ij}v-{\cal Q}_h^{(s)}(a_{ij}{\cal Q}^
{(s)}_h v))
\|^2_{ 1, T}\\
 \lesssim & \sum_{T\in {\cal T}_h}\sum_{i,j=1}^dh_T^{ 2}\|\partial_i
(a_{ij}v-{\cal Q}_h^{(s)}(a_{ij} v)) \|^2_{ T}+h_T^{
2}\|\partial_i ({\cal Q}_h^{(s)}(a_{ij} v)-{\cal
Q}_h^{(s)}(a_{ij}{\cal Q}^ {(s)}_h v)) \|^2_{ T}
\\&\qquad +h_T^{ 4}\|\partial_i
(a_{ij}v-{\cal Q}_h^{(s)}(a_{ij} v)) \|^2_{ 1, T}+h_T^{
4}\|\partial_i ({\cal Q}_h^{(s)}(a_{ij} v)-{\cal
Q}_h^{(s)}(a_{ij}{\cal Q}^ {(s)}_h v))
\|^2_{ 1, T}\\
\lesssim  & \ h^{2m-2}(\|v\|^2_{m-1}+h^2\delta_{m,2}\|v\|^2_2),
  \end{split}
 \end{equation*}
 for $m\in [2, k+1]$ when $s=k-1$ and $m\in [2, k]$ when $s=k-2$.

Finally for (\ref{term3}), we again use the trace inequality
(\ref{tracein}) and the estimate (\ref{error3}) to obtain
\begin{equation*}
\begin{split}
 &\sum_{T\in {\cal T}_h}\sum_{i,j=1}^dh_T
\| a_{ij}v-{\cal Q}_h^{(s)}(a_{ij}{\cal Q}^ {(s)}_h v)\|^2_{\partial T} \\
 \lesssim & \sum_{T\in {\cal T}_h}\sum_{i,j=1}^d
\| a_{ij}v-{\cal Q}_h^{(s)}(a_{ij}{\cal Q}^{(s)}_h v)\|^2_{
T}  +h_T^2\| a_{ij}v-{\cal Q}_h^{(s)}(a_{ij}{\cal Q}^ {(s)}_h v)\|^2_{1,T}  \\
 \lesssim & \sum_{T\in {\cal T}_h}\sum_{i,j=1}^d
\| a_{ij}v-{\cal Q}_h^{(s)}(a_{ij} v)\|^2_{ T} +\| {\cal
Q}_h^{(s)}(a_{ij} v)-{\cal Q}_h^{(s)}(a_{ij}{\cal Q}^ {(s)}_h
v)\|^2_{
T} \\
&\qquad +h_T^2\| a_{ij} v-{\cal Q}_h^{(s)}(a_{ij} v)\|^2_{1,T} +
h_T^2\|{\cal Q}_h^{(s)}(a_{ij} v)-{\cal Q}_h^{(s)}(a_{ij}{\cal Q}^{(s)}_h v)\|^2_{1,T} \\
 \lesssim & \sum_{T\in {\cal T}_h}\sum_{i,j=1}^d
\| a_{ij}v-{\cal Q}_h^{(s)}(a_{ij} v)\|^2_{ T} +\|a_{ij} v -
a_{ij}{\cal Q}^ {(s)}_h v \|^2_{
T} \\
&\qquad +h_T^2\| a_{ij}v-{\cal Q}_h^{(s)}(a_{ij} v)\|^2_{1,T} +
h_T^2\| a_{ij} v - a_{ij}{\cal Q}^ {(s)}_h v \|^2_{1,T} \\
 \lesssim & \ h^{2m-2}\|v\|^2_{m-1},
 \end{split}
 \end{equation*}
 where $m\in [2, k+1]$ if $s=k-1$ and $m\in [2, k]$ if $s=k-2$. This completes the proof of the lemma.
\end{proof}

\section{Numerical Results}\label{Section:numerics}
The goal of this section is to present some numerical results for
the finite element scheme (\ref{32})-(\ref{2}). Our test problem
seeks $u$ satisfying
\begin{equation}\label{Test-Problem}
\begin{split}
\nabla \cdot (\bmu u)-\frac{1}{2}\sum_{i,j=1}^d
\partial^2_{ij}(a_{ij}u)=&f,\quad \text{in}\
\Omega,\\
u =& g,\quad \text{on}\ \partial\Omega,
\end{split}
\end{equation}
where $\Omega$ is a polygonal domain in 2D. As the problem
(\ref{Test-Problem}) has non-homogeneous Dirichlet data on the
boundary, the weak formulation (\ref{weakform}) must be modified
accordingly so that the weak solution $u=u(x)\in L^2(\Omega)$ is
given by the following equation
\begin{equation}\label{weakform-10}
\begin{split}
\int_\Omega u {\cal L}(v) dx =\frac{1}{2} \sum_{i,j=1}^d
\int_{\partial\Omega} g a_{ij} n_i \partial_j
 v ds - \int_\Omega f v dx, \qquad \forall v\in H^2(\Omega)\cap
 H_0^1(\Omega).
\end{split}
\end{equation}
The corresponding primal-dual weak Galerkin finite element scheme
seeks $(u_h;\rho_h)\in W_{h,s} \times V^0_{h,k}$ satisfying
 \begin{eqnarray}\label{32-new}
s( \rho_h, \sigma)+b(\sigma, u_h)&=&\frac{1}{2}
\sum_{i,j=1}^d\langle a_{ij}g, \sigma_{gj}n_i\rangle_{\partial
\Omega}-(f, \sigma_0)
,\qquad \forall\sigma\in V^0_{h,k},\\
b(\rho_h, v)&=&0,\qquad\ \qquad \qquad\qquad\qquad\qquad\qquad
\forall v\in W_{h,s}.\label{2-new}
\end{eqnarray}

Our numerical implementation is based on the PD-WG scheme
(\ref{32-new})-(\ref{2-new}) with the element of order $k=2$ on
uniformly triangular finite element partitions. This configuration
corresponds to the following selection for the finite element
spaces:
$$
V_{h,2}=\{\rho=\{\rho_0,\rho_b, \brho_g\}: \ \rho_0\in P_2(T), \rho_b\in P_2(e),
\brho_g\in [P_1(e)]^2, e\subset\pT, T\in {\cal T}_h\},
$$
and
$$
W_{h,s}=\{w: \ w|_T \in P_s(T),\ \forall T\in {\cal T}_h
\}, \quad s=0 \mbox{ or } 1.
$$

For simplicity, our numerical experiments will be conducted for only $C^0$-type elements for which $\rho_b$ is identical with the trace of $\rho_0$ on $\pT$ for any $T\in \T_h$. For convenience, the $C^0$-type WG element with $s=1$ (i.e., $W_{h,1}$) shall be named as the
$P_2(T)/[P_1(\pT)]^2/P_1(T)$ element. Analogously, the case corresponding to $s=0$ (i.e., $W_{h,0}$) is named as $P_2(T)/[P_1(\pT)]^2/P_0(T)$.

Two polygonal domains are considered in the numerical experiments, with the first one being
the unit square $\Omega=(0,1)^2$ and the second an L-shaped region with
vertices $A_0=(0,0), \ A_1=(2,0),\ A_2=(2,1), \ A_3=(1,1), \ A_4=(1,2)$, and
$A_5=(0,2)$. Note that the error estimates developed in the previous section are applicable to the square domain, but not to the L-shaped domain. Nevertheless, our primal-dual weak Galerkin finite element scheme is well formulated on any domains. The L-shaped domain is chosen just for the purposes of demonstrating the performance of the algorithm for cases for which theory has not been developed.

The finite element partitions in our computation are obtained through a simple uniform refinement procedure as follows. Given an initial coarse triangulation of the domain, a
sequence of triangulations are obtained successively through a uniform refinement that divides
each coarse level triangle into four congruent sub-triangles by
connecting the three mid-points on the edges. We use $\rho_h=\{\rho_0,
\brho_g\}\in V_{h,2}$ and $u_h\in W_{h,s}, \ s=0,1,$ to denote the numerical solutions
arising from (\ref{32})-(\ref{2}). The numerical solutions are compared with carefully chosen
interpolations of the exact solution in various norms. In particular, the primal variable $u_h$ is compared with the exact solution $u$ on each element at either
three vertices (for $s=1$) or the center (for $s=0$), which is known as the nodal point interpolation and is denoted as $I_h u$. The auxiliary variable $\rho_h$ is supposed to approximate the true solution $\rho=0$, and is therefore compared with $Q_h\rho =0$. The error functions are thus denoted as
$$
\epsilon_h=\rho_h-Q_h\rho\equiv \{\rho_0, \brho_g\}, \quad e_h=u_h- I_h u.
$$

The following norms are applied in the computation for the error:
\begin{eqnarray*}
\mbox{$L^2$- norm:}\quad & &  \3bar \rho_h\3bar_0=\Big(\sum_{T\in {\cal T}_h}
\int_T |\rho_0|^2 dT\Big)^{\frac{1}{2}},\\
\mbox{Semi $H^1$-norm:}\quad  & &
\3bar \brho_h\3bar_1=\Big(\sum_{T\in {\cal T}_h} h_T
\int_{\partial T}
|\brho_g|^2 ds\Big)^{\frac{1}{2}},\\
\mbox{$L^2$-norm:}\quad & &  \|e_h\|_0=\Big(\sum_{T\in {\cal
T}_h} \int_T |e_h|^2 dT\Big)^{\frac{1}{2}}.
\end{eqnarray*}

Tables \ref{NE:TRI:Case1-1}--\ref{NE:TRI:Case1-2} illustrate the
performance of $C^0$-type $P_2(T)/[P_1(\pT)]^2/P_1(T)$ element for the test problem
(\ref{Test-Problem}) with exact solution given by $u=\sin(x_1)\sin(x_2)$ on the unit square domain and the L-shaped domain. The right-hand side function and the Dirichlet boundary
condition are chosen to match the exact solution. Our numerical results
indicate that the convergence rate for the solution $u_h$ is of order $r=2$ in the
discrete $L^2$-norm on both the unit square domain and the L-shaped domain. The result is in great consistency with the theoretical rate of convergence for $u_h$.

\begin{table}[H]
\begin{center}
\caption{Numerical rates of convergence for the $C^0$-
$P_2(T)/[P_1(\pT)]^2/P_1(T)$ element applied to problem
(\ref{Test-Problem}) with exact solution $u=\sin(x_1)\sin(x_2)$ on
$\Omega=(0,1)^2$. The coefficient matrix is $a_{11}=3$,
$a_{12}=a_{21}=1$, and $a_{22}=2$. The drift vector is
$\mu=[1,1]$.}\label{NE:TRI:Case1-1}
\begin{tabular}{|c|c|c|c|c|c|c|}
\hline
$1/h$        & $\3bar\rho_h\3bar_0 $ & order &  $\3bar\brho_g\3bar_{1} $  & order  &   $\|u_h-I_h u\|_0$  & order  \\
\hline
1   &3.52e-01   &   & 4.02e-00 &&       2.11e-01 &  \\
\hline
2 &  4.09e-02   &3.11&  7.16e-01    &2.49 & 5.55e-02&   1.93
\\
\hline
4 & 3.77e-03&   3.44 &  1.22e-01 &  2.55    & 1.42e-02& 1.97
  \\
\hline
8 & 2.78e-04    &3.76&  1.79e-02&   2.79    & 3.57e-03& 1.99
\\
\hline
16 & 1.87e-05   &3.90&  2.36e-03    &2.91   & 8.93e-04  &2.00
 \\
\hline
32 & 1.20e-06&  3.95    &3.04e-04&  2.96&   2.23e-04    &2.00
\\
\hline
\end{tabular}
\end{center}
\end{table}

\begin{table}[H]
\begin{center}
\caption{Numerical rates of convergence for the $C^0$-
$P_2(T)/[P_1(\pT)]^2/P_1(T)$ element applied to the problem
(\ref{Test-Problem}) with exact solution $u=\sin(x_1)\sin(x_2)$ on
the L-shaped domain. The coefficient matrix is $a_{11}=3$,
$a_{12}=a_{21}=1$, and $a_{22}=2$. The drift vector is $\mu=[1,1]$.}\label{NE:TRI:Case1-2}
\begin{tabular}{|c|c|c|c|c|c|c|}
\hline
$1/h$        & $\3bar\rho_h\3bar_0 $ & order &  $\3bar\brho_g\3bar_{1} $  & order  &   $\|u_h-I_h u\|_0$  & order  \\
\hline
1    & 5.33e-01     & & 5.20e-00    &&  3.07e-01    & \\
\hline
2  & 7.77e-02 &2.78& 1.18e-00&  2.15 &  8.03e-02 &1.94 \\
\hline
4 & 6.42e-03&3.60   & 1.90e-01 &2.63&   2.04e-02    &1.98 \\
\hline
8 &  4.49e-04&  3.84& 2.65e-02  &2.84   &5.11e-03&  2.00 \\
\hline
16 & 2.95e-05 &3.93& 3.47e-03 & 2.93    &1.27e-03 & 2.00  \\
\hline
 32 & 1.89e-06&3.97& 4.44e-04 &2.97     &3.18e-04&2.00
 \\
\hline
\end{tabular}
\end{center}
\end{table}

Tables \ref{NE:TRI:Case5-1}--\ref{NE:TRI:Case5-2} illustrate the
performance of the $C^0$-type $P_2(T)/[P_1(\pT)]^2/P_0(T)$ element for the test problem
(\ref{Test-Problem}) with exact solution given by $u=\sin(x_1)\sin(x_2)$ on the unit square domain and the L-shaped domain with constant coefficients. The numerical results are in
good consistency with what the theory predicts.

\begin{table}[H]
\begin{center}
\caption{Numerical rates of convergence for the $C^0$-
$P_2(T)/[P_1(\pT)]^2/P_0(T)$ element applied to the problem
(\ref{Test-Problem}) with exact solution $u=\sin(x_1)\sin(x_2)$ on
$\Omega=(0,1)^2$. The coefficient matrix is $a_{11}=3$,
$a_{12}=a_{21}=1$, and $a_{22}=2$. The drift  vector is $\mu=[1,1]$.}\label{NE:TRI:Case5-1}
\begin{tabular}{|c|c|c|c|c|c|c|}
\hline
$1/h$        & $\3bar\rho_h\3bar_0 $ & order &  $\3bar\brho_g\3bar_{1} $  & order  &   $\|u_h-I_h u\|_0$  & order  \\
\hline
1   & 9.82e-03  &&  2.57e-01    &&  1.03e-01&
 \\
\hline
2 & 4.79e-03    &1.04&  6.98e-02 &1.88  &   4.38e-02  & 1.23
\\
\hline
4 & 1.53e-03    &1.65&  1.81e-02 &1.95& 1.72e-02&   1.35
 \\
\hline
8 & 3.76e-04&   2.02 &  4.43e-03 &  2.03 & 7.74e-03 &1.15
\\
\hline
16 & 9.27e-05   &2.02   & 1.09e-03  &2.02   & 3.77e-03  &1.04
 \\
\hline
32 &2.31e-05    &2.01&  2.72e-04&   2.01& 1.87e-03  &1.01
\\
\hline
\end{tabular}
\end{center}
\end{table}

\begin{table}[H]
\begin{center}
\caption{Convergence rates for the $C^0$-
$P_2(T)/[P_1(\pT)]^2/P_0(T)$ element applied to problem
(\ref{Test-Problem}) with exact solution $u=\sin(x_1)\sin(x_2)$ on
the L-shaped domain. The coefficient matrix is $a_{11}=3$,
$a_{12}=a_{21}=1$, and $a_{22}=2$. The drift  vector is $\mu=[1,1]$.}\label{NE:TRI:Case5-2}
\begin{tabular}{|c|c|c|c|c|c|c|}
\hline
$1/h$        & $\3bar\rho_h\3bar_0 $ & order &  $\3bar\brho_g\3bar_{1} $  & order  &   $\|u_h-I_h u\|_0$  & order  \\
\hline
1  &  0.0199    &&  5.01e-01    && 1.72e-01 & \\
\hline
2  &0.0165 &    0.269   &1.39e-01 & 1.85& 7.65e-02 &    1.17
 \\
\hline
4 &0.00470  &1.81   & 3.60e-02& 1.95    & 3.37e-02  &1.18
 \\
\hline
8 & 1.18E-03    &2.00 &  8.96e-03 & 2.01  & 1.61e-02    &1.07
 \\
\hline
16 &2.93E-04&   2.01&   2.23e-03&   2.01&   7.92e-03    &1.02
 \\
\hline
 32 & 7.32E-05& 2.00 & 5.56e-04 &2.00 & 3.94e-03 & 1.01
\\
\hline
\end{tabular}
\end{center}
\end{table}

Tables \ref{NE:TRI:Case7-1}--\ref{NE:TRI:Case7-2} illustrate the
performance of the $C^0$-type $P_2(T)/[P_1(\pT)]^2/P_0(T)$ element for the test problem
(\ref{Test-Problem}) on the unit square domain and the L-shaped domain. The exact solution is given by $u=\sin(x_1)\sin(x_2)$ and the differential operator has variable coefficients. The numerical rate of convergence for the primal-dual WG finite element method is consistent with the theory.

\begin{table}[H]
\begin{center}
\caption{Numerical rates of convergence for the $C^0$-
$P_2(T)/[P_1(\pT)]^2/P_0(T)$ element applied to problem
(\ref{Test-Problem}) with exact solution $u=\sin(x_1)\sin(x_2)$ on
$\Omega=(0,1)^2$. The coefficient matrix is given by $a_{11}=1+x_1^2$,
$a_{12}=a_{21}=0.25x_1x_2$, and $a_{22}=1+x_2^2$. The drift vector is $\mu=[x,y]$.}\label{NE:TRI:Case7-1}
\begin{tabular}{|c|c|c|c|c|c|c|}
\hline
$1/h$        & $\3bar\rho_h\3bar_0 $ & order &  $\3bar\brho_g\3bar_{1} $  & order  &   $\|u_h-I_h u\|_0$  & order  \\
\hline
1   & 1.18e-02  &&  1.94e-01    &&  4.88e-02 &
  \\
\hline
2 &  1.09e-03   &3.44&  4.70e-02    &2.05&  2.49e-02    &0.97
\\
\hline
4 & 3.13e-04    &1.81&  1.13e-02 &  2.05    & 1.14e-02  &1.13
  \\
\hline
8 & 8.76e-05    &1.83 & 2.77e-03    &2.03   & 5.52e-03 &1.04
\\
\hline
16 & 2.25e-05   &1.96&  6.85e-04&   2.02&   2.74e-03 &1.01
 \\
\hline
32 & 5.67e-06   &2.00 & 1.70e-04    &2.01   & 1.37e-03  &1.00
\\
\hline
\end{tabular}
\end{center}
\end{table}

\begin{table}[H]
\begin{center}
\caption{Numerical rates of convergence rates for the $C^0$-
$P_2(T)/[P_1(\pT)]^2/P_0(T)$ element applied to the problem
(\ref{Test-Problem}) with exact solution $u=\sin(x_1)\sin(x_2)$ on
the L-shaped domain. The coefficient matrix is given by $a_{11}=1+x_1^2$,
$a_{12}=a_{21}=0.25x_1x_2$, and $a_{22}=1+x_2^2$. The drift vector is $\mu=[x,y]$.}\label{NE:TRI:Case7-2}
\begin{tabular}{|c|c|c|c|c|c|c|}
\hline
$1/h$        & $\3bar\rho_h\3bar_0 $ & order &  $\3bar\brho_g\3bar_{1} $  & order  &   $\|u_h-I_h u\|_0$  & order  \\
\hline
1    & 2.12e-02     && 5.39e-01 &&  2.03e-01    & \\
\hline
2  & 1.25e-02   &0.763  & 1.33e-01 &    2.02    & 9.15e-02 &    1.15
 \\
\hline
4 & 3.09e-03    &2.01   & 3.24e-02  &2.04 &  4.23e-02   &1.11\\
\hline
8 & 7.60e-04&   2.02 &  7.93e-03 &2.03  & 2.06e-02  &1.04
\\
\hline
16 & 1.89e-04   &2.01   &1.96e-03&  2.02 &  1.02e-02    &1.01
  \\
\hline
 32 & 4.71e-05& 2.00 &4.87e-04&2.01 & 5.10e-03 &1.00
 \\
\hline
\end{tabular}
\end{center}
\end{table}

Tables \ref{NE:TRI:Case9-1}--\ref{NE:TRI:Case9-3} illustrate the
performance of the $C^0$-type $P_2(T)/[P_1(\pT)]^2/P_1(T)$ element for the test problem
(\ref{Test-Problem}) when the parameter $\delta$ varies in the stabilizer $s(\cdot,\cdot)$. The exact solution is given by $u=\sin(x_1)\sin(x_2)$, and the domain in this test case is the unit square with constant coefficients and drifting. The results
indicate that the convergence rate for the solution $u_h$ of the weak
Galerkin algorithm (\ref{32})-(\ref{2}) is of order $r=2$ in the
discrete $L^2$-norm for $u_h$ for different values of $\delta$. Table \ref{NE:TRI:Case9-6} illustrates the performance of the numerical scheme on the L-shaped domain with $\delta=10000$.
It is interesting to note that the absolute error decreases as $\delta$ increases.

\begin{table}[H]
\begin{center}
\caption{Numerical rates of convergence for the $C^0$-
$P_2(T)/[P_1(\pT)]^2/P_1(T)$ element applied to the problem
(\ref{Test-Problem}) with exact solution $u=\sin(x_1)\sin(x_2)$ on
$\Omega=(0,1)^2$. The coefficient matrix is $a_{11}=3$,
$a_{12}=a_{21}=1$, and $a_{22}=2$. The drift vector is $\mu=[1,1]$. The stabilizer parameter has value $\delta=0.1$.}\label{NE:TRI:Case9-1}
\begin{tabular}{|c|c|c|c|c|c|c|}
\hline
$1/h$        & $\3bar\rho_h\3bar_0 $ & order &  $\3bar\brho_g\3bar_{1} $  & order  &   $\|u_h-I_h u\|_0$  & order  \\
\hline
1   &2.22E-01&& 2.59E+00&&1.40E-01 &    \\
\hline
2 &   2.12E-02&3.39&4.51E-01&2.52 &4.58E-02&1.61
\\
\hline
4 & 1.62E-03&3.71 &6.74E-02&2.74 &1.29E-02&1.83
  \\
\hline
8 & 1.09E-04&3.89&8.97E-03&2.91&3.39E-03&1.93
\\
\hline
16 & 7.01E-06&3.96 &1.15E-03&2.97&8.65E-04&1.97
 \\
\hline
32 &  4.41E-07&3.99&1.44E-04&2.99&2.18E-04&1.99
\\
\hline
\end{tabular}
\end{center}
\end{table}

\begin{table}[H]
\begin{center}
\caption{Numerical rates of convergence for the $C^0$-
$P_2(T)/[P_1(\pT)]^2/P_1(T)$ element applied to the problem
(\ref{Test-Problem}) with exact solution $u=\sin(x_1)\sin(x_2)$ on
$\Omega=(0,1)^2$. The coefficient matrix is $a_{11}=3$,
$a_{12}=a_{21}=1$, and $a_{22}=2$. The drift vector is $\mu=[1,1]$. The stabilizer parameter has value $\delta=1.0$.}\label{NE:TRI:Case9-2}
\begin{tabular}{|c|c|c|c|c|c|c|}
\hline
$1/h$        & $\3bar\rho_h\3bar_0 $ & order &  $\3bar\brho_g\3bar_{1} $  & order  &   $\|u_h-I_h u\|_0$  & order  \\
\hline
1   & 5.11e-02 && 8.90e-01 && 5.58e-02 &\\
\hline
2 & 8.51e-03 &2.59& 1.83e-01&2.28 & 2.20e-02 &1.35
\\
\hline
4 & 1.03e-03 &3.04 & 4.41e-02&2.05 & 8.68e-03&1.34
 \\
\hline
8 & 9.02e-05&3.52& 7.65e-03&2.53& 2.88e-03&1.59
\\
\hline
16 & 6.44e-06&3.81& 1.08e-03 &2.83&8.12e-04&1.82
 \\
\hline
32 & 4.24e-07&3.93&1.40e-04&2.94 &2.13e-04&1.93
\\
\hline
\end{tabular}
\end{center}
\end{table}

\begin{table}[H]
\begin{center}
\caption{Numerical rates of convergence for the $C^0$-
$P_2(T)/[P_1(\pT)]^2/P_1(T)$ element applied to the problem
(\ref{Test-Problem}) with exact solution $u=\sin(x_1)\sin(x_2)$ on
$\Omega=(0,1)^2$. The coefficient matrix is $a_{11}=3$,
$a_{12}=a_{21}=1$, and $a_{22}=2$. The drift vector is $\mu=[1,1]$. The stabilizer parameter has value $\delta=10000$.}\label{NE:TRI:Case9-3}
\begin{tabular}{|c|c|c|c|c|c|c|}
\hline
$1/h$        & $\3bar\rho_h\3bar_0 $ & order &  $\3bar\brho_g\3bar_{1} $  & order  &   $\|u_h-I_h u\|_0$  & order  \\
\hline
1   & 5.98e-06&&6.78e-01&& 4.04e-02&\\
\hline
2 & 1.35e-06&2.15&3.08e-02&4.46 &9.48e-03&2.09
\\
\hline
4 & 6.40e-07&1.08&1.39e-03&4.47&2.05e-03&2.21
 \\
\hline
8 & 2.26e-07&1.50 &6.30e-05&4.46 &4.85e-04&2.08
\\
\hline
16 & 5.97e-08&1.92 &6.38e-06&3.30 &1.21e-04&2.01
 \\
\hline
32 &  1.29e-08&2.20 &2.60e-06&1.30&3.15e-05& 1.94
\\
\hline
\end{tabular}
\end{center}
\end{table}

\begin{table}[H]
\begin{center}
\caption{Numerical rates of convergence for the $C^0$-
$P_2(T)/[P_1(\pT)]^2/P_1(T)$ element applied to the problem
(\ref{Test-Problem}) with exact solution $u=\sin(x_1)\sin(x_2)$ on the L-shaped domain. The coefficient matrix is $a_{11}=3$, $a_{12}=a_{21}=1$, and $a_{22}=2$. The drift vector is $\mu=[1,1]$. The stabilizer parameter has value $\delta=10000$.}\label{NE:TRI:Case9-6}
\begin{tabular}{|c|c|c|c|c|c|c|}
\hline
$1/h$        & $\3bar\rho_h\3bar_0 $ & order &  $\3bar\brho_g\3bar_{1} $  & order  &   $\|u_h-I_h u\|_0$  & order  \\
\hline
 1 &  8.31e-06&&8.36e-01&&1.33e-01&
 \\
\hline
  2 & 6.90e-06&0.269&3.64e-02&4.52 &2.51e-02&2.40
 \\
\hline
4 & 2.77e-06&1.32&1.56e-03&4.54&5.88e-03&2.09
 \\
\hline
8 & 7.97e-07&1.79 &8.03e-05&4.28 &1.45e-03&2.03
 \\
\hline
 16 & 1.99e-07&2.00&2.11e-05&1.93&3.59e-04&2.01
 \\
\hline
 32&4.35e-08&2.20&9.34e-06&1.18&8.99e-05&2.00
 \\
\hline
\end{tabular}
\end{center}
\end{table}

Figure \ref{Results-100} illustrates the performance of our
numerical methods for a problem with discontinuous solution. This
test problem has the following configuration: domain
$\Omega=(-1,1)\times (-1,1)$, diffusion tensor $a=\alpha I$ with
$\alpha=1$ for $x_1<0$ and $\alpha=2$ for $x_1\ge 0$, load
function $f=0$, Dirichlet boundary condition $g=2$ for $x_1<0$ and
$g=1$ for $x_1>0$. The numerical solutions are obtained by using
piecewise linear functions for the primal variable; i.e., $s=1$ in
the finite element space $W_{h,s}$. For this test problem, the
exact solution is known to be $u=2$ for $x_1<0$ and $u=1$ for
$x_1>0$ when the drift vector is zero. The plot indicates that the
primal-dual weak Galerkin finite element solution is very
consistent with the exact solution when $\bmu=0$ (see left plot).
The right plot corresponds to the case of $\bmu=[1,1]$ for which
no exact solution is known.

\begin{figure}[h]
\centering
\begin{tabular}{cc}
  \resizebox{2.4in}{2.1in}{\includegraphics{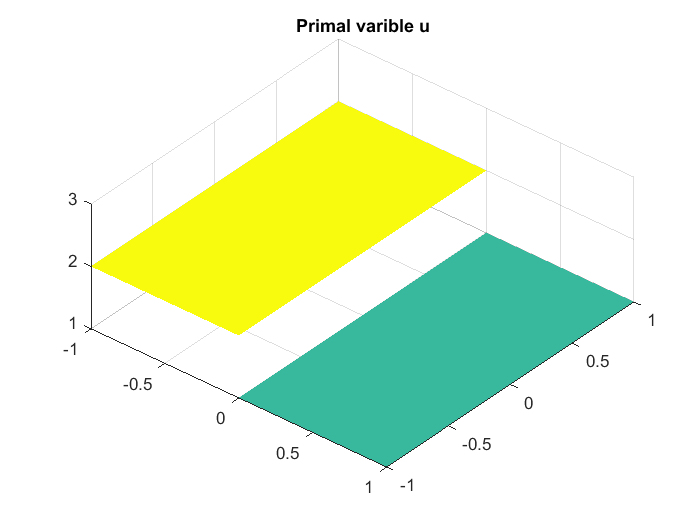}}
  \resizebox{2.4in}{2.1in}{\includegraphics{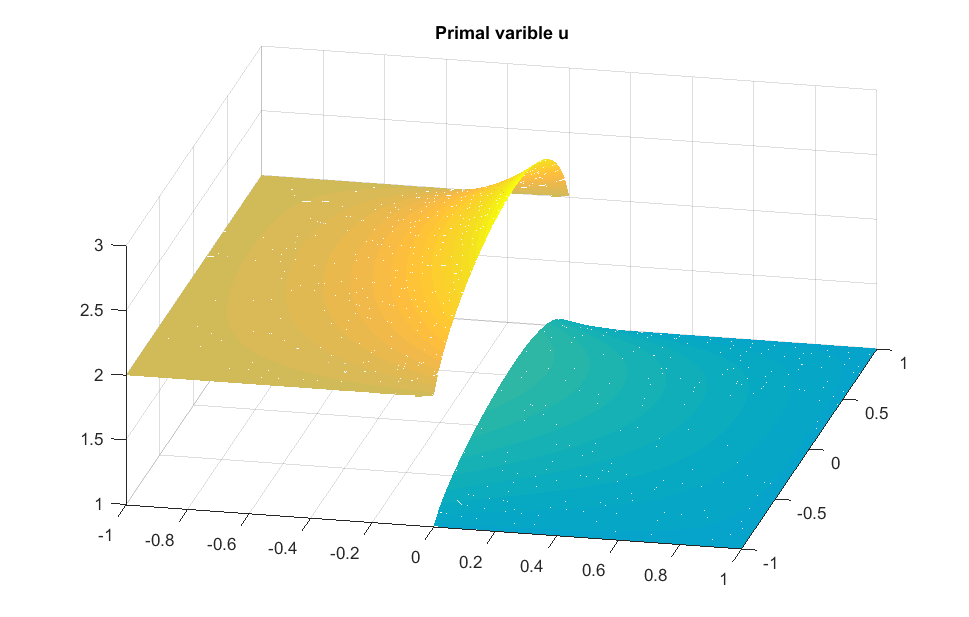}}
\end{tabular}
  \caption{Domain $\Omega=(-1,1)\times (-1,1)$, diffusion tensor $a=\alpha I$ with $\alpha=1$ for $x_1<0$ and $\alpha=2$ for $x_1\ge 0$,
  load function $f=0$, Dirichlet boundary data $g=2$ for $x_1<0$ and $g=1$ for $x_1>0$. Plot for primal variable $u_h$: drift vector $\bmu=0$ (left),
  drift vector $\bmu=[1,1]$ (right).}
  \label{Results-100}
\end{figure}

The primal-dual weak Galerkin finite element method
(\ref{32})-(\ref{2}) was further applied to a test problem for
which the exact solution is discontinuous along the $x_2$-axis.
The configuration of this test problem is given as follows: domain
$\Omega=(-1,1)\times (-1,1)$, diffusion tensor $a=\alpha I$ with
$\alpha=1$ for $x_1<0$ and $\alpha=2$ for $x_1>0$, load function
$f=9\sin(3x_2)$, Dirichlet boudnary data $g=2\sin(3x_2)$ for
$x_1<0$ and $g=\sin(3x_2)$ for $x_1>0$, and drift vector $\bmu=0$.
We use piecewise linear functions to approximate the primal
variable, and the profile of the corresponding numerical solution
is presented in Figure \ref{Results-101}. The exact solution for
this test problem is given by $u=2\sin(3x_2)$ for $x_1<0$ and
$u=\sin(3x_2)$ for $x_1>0$. The plot shows a great consistency
between the numerical solution and the exact one.

\begin{figure}[h]
\centering
\begin{tabular}{cc}
  \resizebox{2.4in}{1.8in}{\includegraphics{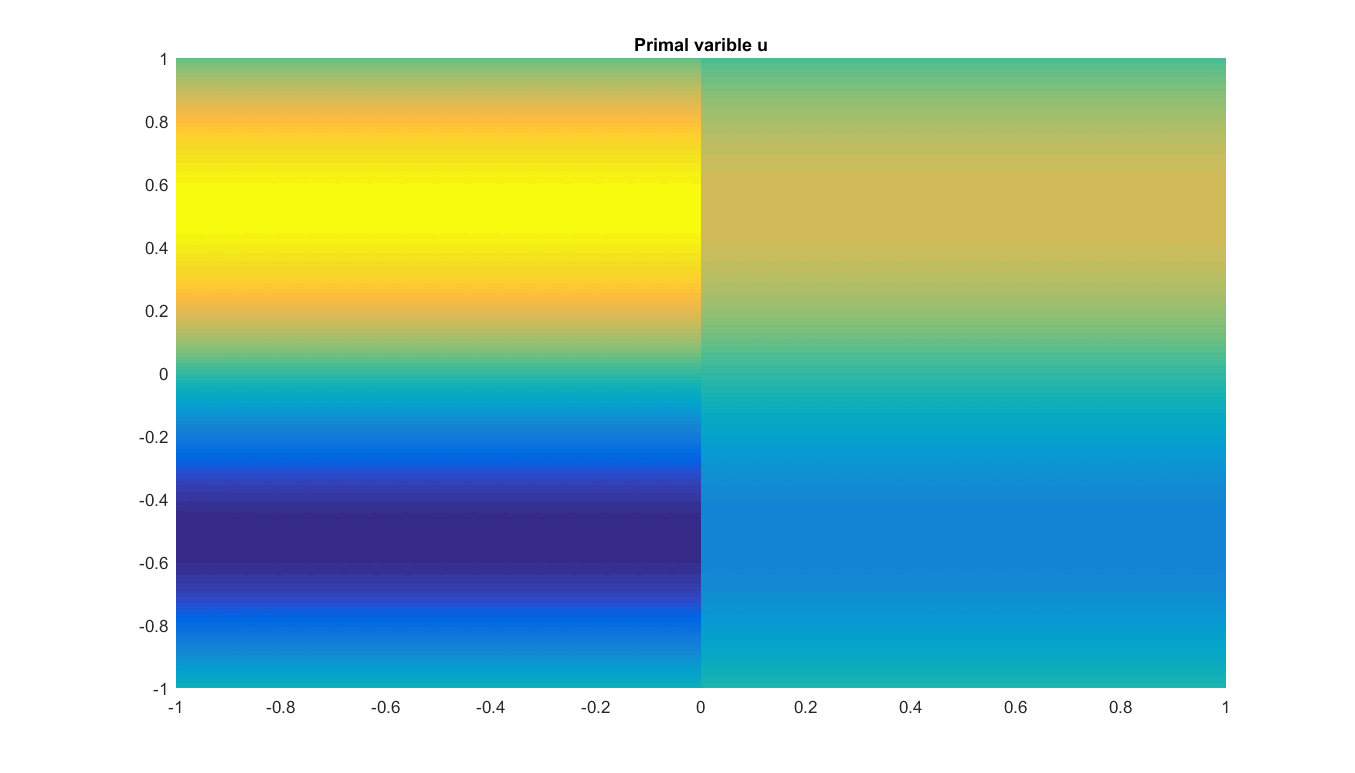}}
  \resizebox{2.4in}{2.1in}{\includegraphics{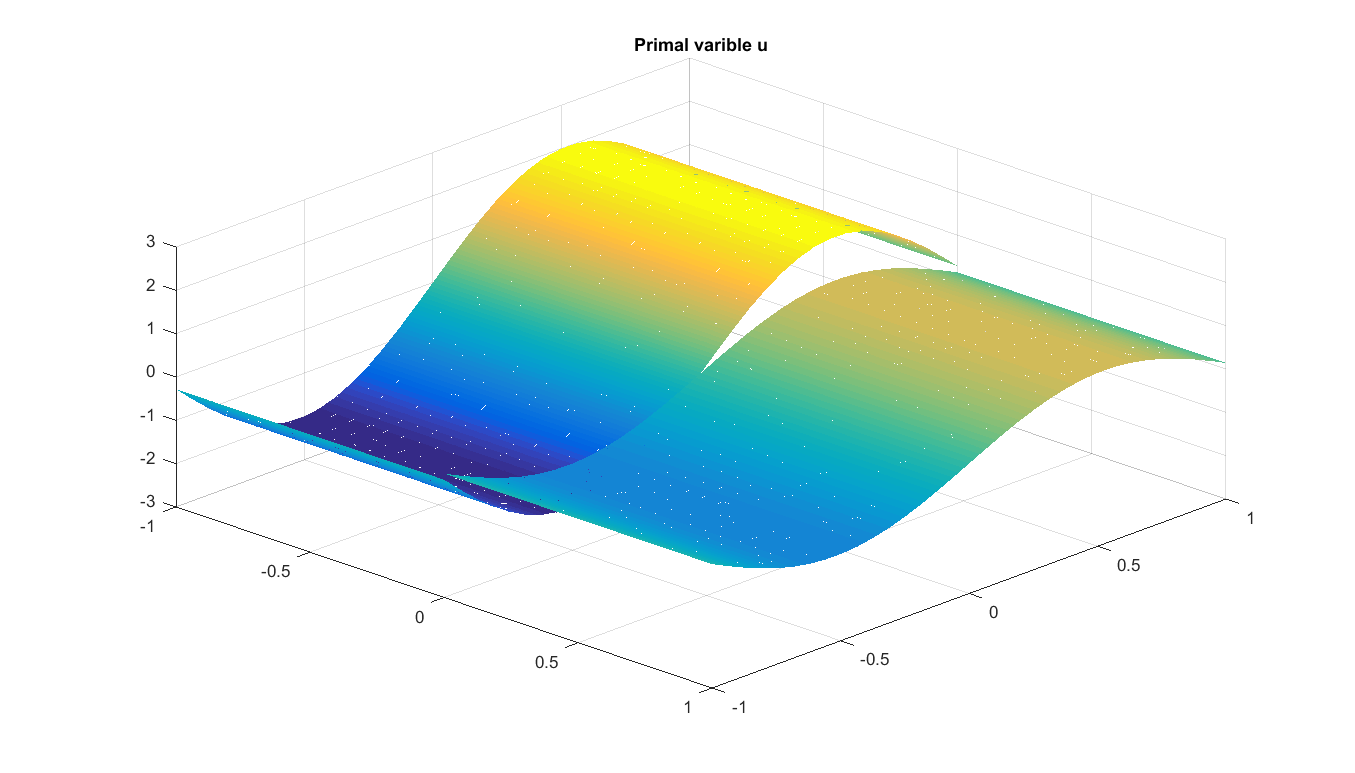}}
\end{tabular}
  \caption{Domain $\Omega=(-1,1)\times (-1,1)$, diffusion tensor $a=\alpha I$ with $\alpha=1$ for $x_1<0$ and $\alpha=2$ for $x_1>0$,
  load function $f=9\sin(3x_2)$, Dirichlet boudnary data
  $g=2\sin(3x_2)$ for $x_1<0$ and $g=\sin(3x_2)$ for $x_1>0$,
  drift vector $\bmu=0$. Plot for primal variable $u_h$: contour plot (left),
  surface plot (right).}
  \label{Results-101}
\end{figure}

Our last numerical experiment was conducted on a problem for which
the exact solution is not only discontinuous, but also not known
to us. This test problem has the following configuration: domain
$\Omega=(-1,1)\times (-1,1)$, diffusion tensor $a=\alpha I$ with
$\alpha=1$ in the first and third quadrant and $\alpha=10$ in the
second and fourth quadrant, load function $f=\frac14$, Dirichlet
boundary data $g=0$, drift vector $\bmu=[1,1]$. The numerical
solution for the primal variable can be seen in Figure
\ref{Results-102}. The left figure is the contour plot, and the
one on right is a 3D surface plot.

\begin{figure}[h]
\centering
\begin{tabular}{cc}
  \resizebox{2.4in}{1.8in}{\includegraphics{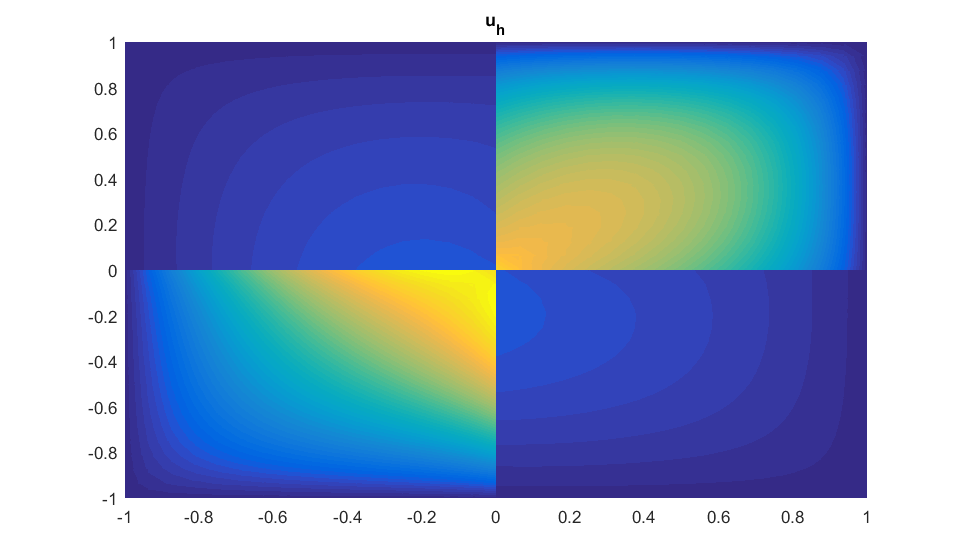}}
  \resizebox{2.4in}{2.1in}{\includegraphics{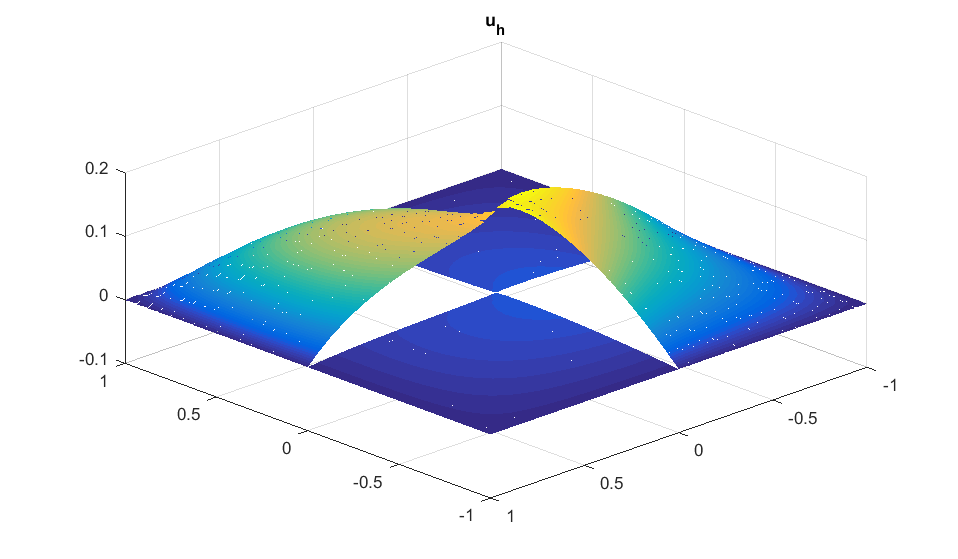}}
\end{tabular}
  \caption{Domain $\Omega=(-1,1)\times (-1,1)$, diffusion tensor $a=\alpha I$ with $\alpha=1$ in the first and third quadrant and $\alpha=10$ in the second and fourth
  quadrant, load function $f=\frac14$, Dirichlet boundary data $g=0$, drift vector $\bmu=[1,1]$. Plot for primal variable $u_h$: contour plot (left),
  surface plot (right).}
  \label{Results-102}
\end{figure}

\newpage

\end{document}